\pgfplotsset{compat=1.10}
\newcommand{\R}{\mathbb{R}}
\newcommand{\N}{\mathbb{N}}
\newcommand{\T}{\mathbb{T}}
\renewcommand{\O}{\mathcal{O}}
\newcommand{\Sc}{\mathcal{S}}
\newcommand{\fh}{\mathfrak{h}}
\newcommand{\bZ}{\mathbf{Z}}
\newcommand{\bY}{\mathbf{Y}}
\newcommand{\cB}{\mathcal{B}}
\newcommand{\sE}{\mathscr{E}}
\newcommand{\ta}{\texttt{a}}
\newcommand{\bta}{\mbox{\small$\ta$}}
\newcommand{\sta}{\mbox{\scriptsize$\ta$}}
\newcommand{\tc}{\texttt{c}}
\newcommand{\btc}{\mbox{\small$\tc$}}
\newcommand{\stc}{\mbox{\scriptsize$\tc$}}
\newcommand{\BL}[1]{\mathrm{BL}_{#1}}
\newcommand{\tang}{\mathrm{tang}}
\newcommand{\trans}{\mathrm{trans}}
\newcommand{\cell}{\mathrm{cell}}
\newcommand{\alg}{\mathrm{alg}}
\newcommand{\Deg}{\overline{\deg}\,}
\renewcommand{\ge}{\geqslant}
\renewcommand{\leq}{\leqslant}
\renewcommand{\geq}{\geqslant}
\theoremstyle{plain}
\newtheorem{theorem}{Theorem}[section]
\newtheorem{lemma}[theorem]{Lemma}
\newtheorem{proposition}[theorem]{Proposition}
\newtheorem{corollary}[theorem]{Corollary}
\newtheorem{conjecture}[theorem]{Conjecture}
\newtheorem*{first key estimate}{First key estimate}
\newtheorem*{second key estimate}{Second key estimate}
\theoremstyle{definition}
\newtheorem{definition}[theorem]{Definition}
\newtheorem*{acknowledgement}{Acknowledgement}
\newtheorem*{induction hypothesis}{Induction hypothesis}
\newtheorem*{notation}{Notation}
\title[New Kakeya estimates]{New Kakeya estimates using the\\ polynomial Wolff axioms}
\author{Jonathan Hickman}
\address{Mathematical Institute, University of St Andrews, North Haugh,
St Andrews, Fife, KY16 9SS, UK.}
\email{jeh25@st-andrews.ac.uk}
\author{Keith M. Rogers}
\address{Instituto de Ciencias Matem\'aticas CSIC-UAM-UC3M-UCM, Madrid, 28049, Spain.}
\email{keith.rogers@icmat.es}
\thanks{Partially supported by the EPSRC grant EP/R015104/1, the NSF grant DMS-1440140, and the MINECO grants SEV-2015-0554 and MTM2017-85934-C3-1-P}
\begin{document}
\begin{abstract} We obtain new bounds for the Kakeya maximal conjecture in most dimensions $n<100$, as well as improved bounds for the Kakeya set conjecture when $n=7$ or $9$. For this we consider
Guth and Zahl's strengthened formulation of the maximal conjecture, concerning families of tubes that satisfy the polynomial Wolff axioms. Our results give improved estimates for this strengthened formulation when $n =5$ or $n \geq 7$.  \end{abstract}

\maketitle




\section{Introduction}

For $n \geq 2$ and small $\delta>0$, a \emph{$\delta$-tube} is a cylinder $T\subset \mathbb{R}^n$ of unit height and radius~$\delta$, with arbitrary position and arbitrary orientation $\mathrm{dir}(T)\in S^{n-1}$. A family~$\T$ of $\delta$-tubes is \emph{direction-separated} if $\{\mathrm{dir}(T) : T \in \T\}$ forms a $\delta$-separated subset of the unit sphere.

\begin{conjecture}[Kakeya maximal conjecture]\label{maximal conjecture} Let  $p\ge \frac{n}{n-1}$. For all $\varepsilon > 0$, there exists a constant $C_{\varepsilon, n}>0$ such that
\begin{equation}\label{maximal inequality}
\Big\| \sum_{T\in\mathbb{T}} \chi_T\Big\|_{L^p(\mathbb{R}^n)}\,\leq\,  C_{\varepsilon, n} \delta^{-(n-1 - n/p)-\varepsilon} \Big(\sum_{T \in \T} |T| \Big)^{1/p}
\end{equation}
whenever $0 < \delta < 1$ and $\mathbb{T}$ is a direction-separated family of $\delta$-tubes. 
\end{conjecture}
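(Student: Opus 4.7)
Conjecture \ref{maximal conjecture} is the classical Kakeya maximal conjecture, which is open in every dimension $n \geq 3$, so rather than offering a complete proof I outline the strategy by which one attacks it, consistent with the paper's use of the polynomial Wolff axioms.

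The plan is to interpolate down to the endpoint $p = n/(n-1)$ (everything else follows from the trivial $L^\infty$ bound) and induct on the scale $\delta$. Fix a large parameter $K$ and, at each point $x$, group the directions of tubes through $x$ into $K^{-1}$-caps of $S^{n-1}$. If for most $x$ these caps span an $n$-transverse frame (the \emph{broad} case), I would invoke the Bennett--Carbery--Tao multilinear Kakeya inequality to obtain the endpoint bound with a loss of $K^{O(1)}\delta^{-\varepsilon}$, which is admissible after absorbing $K$ into $C_{\varepsilon,n}$.

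Otherwise (the \emph{narrow} case), the relevant directions concentrate near a low-degree algebraic variety, and Guth's polynomial partitioning is the natural tool: find a polynomial $P$ of degree $D = D(K)$ whose zero set equipartitions $\sum_T \chi_T$, recurse on each cell of $\R^n \setminus Z(P)$ with a reduced tube count, and treat the tubes tangent to $Z(P)$ using the Guth--Zahl polynomial Wolff axioms. The latter bound the number of $\delta$-tubes tangent to an $m$-dimensional subvariety of degree $D$ by $O(D^{n-m}\delta^{-m})$, which is the engine that powers every partial result the paper advertises.

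The principal obstacle, and the reason the full conjecture remains open, is closing the induction for tangential tubes when the tangency occurs along a subvariety of dimension $m \leq n-2$. In this regime the polynomial Wolff count is strictly weaker than what one would want, and balancing it against multilinear Kakeya by interpolation yields only the partial gains described in the abstract. A genuine resolution would require either a strictly sharper tangent-tube count, or a method for iterating polynomial partitioning across scales without accruing a $\delta^{-c}$ loss per level; neither is presently available, which is why I would not expect to prove Conjecture \ref{maximal conjecture} in full, only to push the best known exponent up in the dimensions the paper targets.
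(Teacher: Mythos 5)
You are correct that Conjecture~\ref{maximal conjecture} is an open problem which the paper states but does not prove; the paper only establishes partial progress (Theorem~\ref{linear theorem}, via Theorem~\ref{main theorem} and Proposition~\ref{Bourgain--Guth}). Recognising this and declining to fabricate a proof is the right call.

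Your strategic outline is broadly aligned with the paper's actual argument, with a few points worth sharpening. First, the paper does not invoke the Bennett--Carbery--Tao multilinear Kakeya inequality in the broad case; it works instead with Guth's $k$-broad norms, which are strictly weaker than $k$-linear expressions (Proposition~\ref{k-broad vrs k-linear} records the comparison but is not used). The key property exploited is that the $k$-broad norm \emph{vanishes} when tubes cluster near a $(k-1)$-dimensional transverse complete intersection (Lemma~\ref{vanishing lemma}), which is what makes the polynomial partitioning recursion terminate at dimension $k-1$. Second, the tangent-tube count you quote, $O(D^{n-m}\delta^{-m})$, is not the form the polynomial Wolff axiom takes. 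In the paper's normalisation, the $(D,N)$-polynomial Wolff axiom together with Wongkew's volume bound $|2B_{\delta_\ell}\cap N_{2\delta}S_\ell|\lesssim \delta^{n-\ell}\delta_\ell^\ell$ gives $\#\T[\vec S_\ell]\lesssim N\delta^{-(n-1)}(\delta_\ell/\delta)^{-(n-\ell)}$, and the gain comes from averaging these bounds across all intermediate dimensions $m\leq\ell\leq n$ against the cellular losses $D_i$; the optimisation of the weights $\gamma_j$ at the end of Section~\ref{proof section} is what produces the exponent $1+\frac{1}{n-1}(\frac{n}{n-1})^{n-k}$. Third, the passage from $k$-broad to the genuine $L^p$ estimate is not a simple absorption of a $K^{O(1)}$ factor; it is the Bourgain--Guth rescaling induction of Proposition~\ref{Bourgain--Guth}, and it is precisely the constraint $p\geq\frac{n-k+2}{n-k+1}$ from that step that forces the min-max in~\eqref{linear exponent} and limits how far the broad estimate can be pushed. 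Your diagnosis of the bottleneck -- the tension between the tangency count and the narrow reduction -- is qualitatively on target, but in the paper's framework the tension is organised around the choice of $k$, not around interpolation with multilinear Kakeya.
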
 

By an application of H\"older's inequality, one may readily verify that if \eqref{maximal inequality} holds for $p = \frac{n}{n-1}$, then, for all $\varepsilon>0$,  there exists a constant $c_{\varepsilon, n}>0$ such that
\begin{equation*}
    \big|\bigcup_{T \in \T} T \big| \,\geq\, c_{\varepsilon,n} \delta^{\varepsilon} \sum_{T \in \T} |T|.
\end{equation*}
This can be interpreted as the statement that any direction-separated family of  $\delta$-tubes is `essentially disjoint'. A more refined argument shows that if \eqref{maximal inequality} holds for a given $p$, then every Kakeya set in $\R^n$ (that is, every compact set that contains a unit line segment in every direction) has Hausdorff dimension at least~$p'$, the conjugate exponent of $p$. Thus, Conjecture~\ref{maximal conjecture} would imply the \emph{Kakeya set conjecture}, that Kakeya sets in~$\R^n$  have Hausdorff dimension~$n$; see, for instance, \cite{Bourgain1991a, Wolff1999, KT2002b}.

For $n=2$, the set conjecture was proven by Davies \cite{Davies1971} and the maximal conjecture was proven by C\'ordoba~\cite{Cordoba1977} in the seventies. Both conjectures remain challenging and important open problems in higher dimensions; for partial results, see~\cite{CDR1986, Bourgain1991a, Wolff1995, Schlag1998, TVV1998, Bourgain1999, KT1999, KLT2000, KT2002,  BCT2006, Dvir2009, Guth2010, Guth2016b, GR, KZ2019} and references therein. We highlight, in particular, the classical work of Wolff \cite{Wolff1995}, which considers more general families of tubes satisfying the following hypothesis.

\begin{definition} We say that $\T$ satisfies the \emph{linear Wolff axiom} if there is a constant $N \geq 1$, depending only on $n$, such that
\begin{equation*}
    \#\big\{ T \in \mathbb{T} :  T \subseteq E \big\} \,\leq\,  N\delta^{-(n-1)} |E|
\end{equation*}
whenever $E \subset \R^n$ is a rectangular box of arbitrary dimensions.
\end{definition}

In \cite{Wolff1995}, Wolff showed that \eqref{maximal inequality} holds for the restricted range $p \geq \frac{n+2}{n}$ whenever~$\T$ satisfies the linear Wolff axiom.\footnote{Strictly speaking, Wolff's theorem \cite{Wolff1995} holds under a slightly less restrictive condition referred to simply as \emph{the Wolff axiom}. See \cite{GZ2018} for a comparison of these conditions.} Furthermore, it is not difficult to see that any direction-separated $\T$ satisfies the linear Wolff axiom and so his result provides similar progress for  Conjecture \ref{maximal conjecture}. 

Interestingly, there exist examples of tube families $\T$ in dimensions $n \geq 4$ that satisfy the linear Wolff axiom, but for which \eqref{maximal inequality} fails to hold for the whole range $p \geq \frac{n}{n-1}$; see \cite{Tao2005}. In particular, when $n = 4$ one may construct such $\T$ for which~\eqref{maximal inequality} is only valid in Wolff's range $p \geq 3/2$. Examples of this kind are not direction-separated and therefore do not provide counterexamples to Conjecture~\ref{maximal conjecture}. 

To go beyond $p \geq 3/2$ in four dimensions, Guth and Zahl \cite{GZ2018} considered families of tubes which satisfy a more restrictive version of the linear Wolff axiom.

\begin{definition}
We say that $\mathbb{T}$ satisfies the {\it $(D,N)$-polynomial Wolff axiom} if  
\begin{equation*}
    \#\big\{ T \in \mathbb{T} :  |T \cap E| \geq \lambda |T| \big\} \,\leq\,  N\delta^{-(n-1)} \lambda^{-n} |E|
\end{equation*}
whenever $\lambda\ge \delta$ and $E\subset \R^n$ is a semialgebraic set of complexity at most $D$.
\end{definition}

In \cite{KR2018}, Katz and the second author showed that for all $D\in\mathbb{N}$ and all $\varepsilon>0$ there is a constant $C_{\varepsilon,n,D}$ such that any direction-separated family $\mathbb{T}$ satisfies the $(D,N)$-polynomial Wolff axiom with $N = C_{\varepsilon,n,D}\delta^{-\varepsilon}$ (see also~\cite{Guth2016} and~\cite{Zahl2018} for similar results in three and four dimensions, respectively). Thus, the  following conjecture of Guth and Zahl~\cite[Conjecture 1.1]{GZ2018} is stronger than the Kakeya maximal conjecture.

\begin{conjecture}[Guth--Zahl \cite{GZ2018}]\label{PWAcon} Let   $p\ge \frac{n}{n-1}$. For all $\varepsilon > 0$, there is a complexity $D = D_{\varepsilon,n} \in \mathbb{N}$ and a constant $C_{\varepsilon,n} >0$ such that
\begin{equation}\label{kakeya}\tag{$\mathrm{K}_p$}
\Big\| \sum_{T\in\mathbb{T}} \chi_T\Big\|_{L^p(\mathbb{R}^n)}\,\leq\,  C_{\varepsilon,n} N^{1-1/p}\delta^{-(n-1 - n/p)-\varepsilon}\Big(\sum_{T \in \T} |T| \Big)^{1/p}
\end{equation}
whenever $0 < \delta < 1$, $N \geq 1$ and $\mathbb{T}$ satisfies the $(D,N)$-polynomial Wolff axiom.\footnote{Strictly speaking, the conjecture of \cite{GZ2018} is slightly weaker than Conjecture \ref{PWAcon} in some regards and stronger in others. The positive results of \cite{GZ2018} are also stated in a slightly different form.}
\end{conjecture}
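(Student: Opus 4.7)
The plan is to attack \eqref{kakeya} by polynomial partitioning, run inductively on the scale $\delta$ and on the number of tubes, in the spirit of Guth and Guth--Zahl \cite{GZ2018}. Fix $p$, $\varepsilon$, and a sufficiently large auxiliary degree $D = D_{\varepsilon, n}$, and assume the desired bound at all coarser scales and smaller tube counts. The strategy is to establish a dichotomy: either the $L^p$ mass of $\sum_T \chi_T$ spreads out among many cells cut by a polynomial (the \emph{cellular case}), or it concentrates in the $\delta$-neighbourhood of a low-degree variety (the \emph{algebraic case}). The polynomial Wolff hypothesis is precisely the tool needed to handle the algebraic case, since tubes tangent to a variety $Z(P)$ with $\deg P \lesssim D^{1/n}$ lie in a semialgebraic set of complexity $\lesssim D$.

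The first step is to choose, via the ham-sandwich style argument, a polynomial $P$ of degree $\lesssim D^{1/n}$ whose zero set partitions $\R^n$ into $\sim D$ open cells $O_i$ on each of which $\int_{O_i}\bigl(\sum_T \chi_T\bigr)^p$ is comparable. If the sum of cellular contributions dominates, each cell meets at most $\#\T / D^{1/n'}$ tubes (roughly), where $n' = n/(n-1)$, and the desired inequality follows by applying the inductive hypothesis inside each cell. The restriction of $\T$ to a cell still satisfies the $(D, N)$-polynomial Wolff axiom (after a harmless adjustment of $D$), because any semialgebraic set inside a cell is semialgebraic in $\R^n$; this preservation is what makes the polynomial Wolff framework well suited to induction on scale.

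In the algebraic case, decompose $\T = \T_\tang \cup \T_\trans$ according to whether a tube is tangent to or transverse to $Z(P)$ at scale $\delta$. A transverse tube meets the $\delta$-neighbourhood $W$ of $Z(P)$ in $\lesssim \deg P$ short pieces of length $\lesssim \delta$, each of which can be rescaled and fed back into the induction at a coarser scale. A tangent tube is essentially contained in $W$; applying the $(D,N)$-polynomial Wolff axiom with $E = W$, a semialgebraic set of complexity $\lesssim D$ with $|W| \lesssim \delta \cdot D^{O(1)}$, pins down the total number of tangent tubes. One then runs a lower-dimensional instance of \eqref{kakeya} on a suitable parametrisation of $Z(P)$, using that the $(n-1)$-dimensional geometry inherited from tangency should again satisfy a polynomial Wolff axiom up to controlled complexity inflation.

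The main obstacle is the tangent case. A count of tangent tubes is not enough; what is needed is an $L^p$ bound on $W$, not just an $L^\infty$ bound on tube multiplicity. This forces an induction on dimension, and the delicate point is to show that the $(D, N)$-polynomial Wolff hypothesis genuinely descends to the tangent sub-family on $Z(P)$ with parameters $(D', N')$ that do not destroy the inductive exponent, and to balance the three numerical parameters --- the degree $D$, the threshold separating cellular from algebraic dominance, and the slack $\delta^{-\varepsilon}$ in the inductive bound --- so that the final exponent recovers $-(n-1-n/p)-\varepsilon$. For the full conjectural range $p \geq n/(n-1)$ this optimisation is out of reach, but one can reasonably expect the scheme to yield \eqref{kakeya} for a restricted range $p \geq p_n$ that improves as $n$ grows, and this is presumably what the present paper establishes for $n = 5$ and $n \geq 7$.
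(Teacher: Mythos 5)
The statement you were asked about is itself an open conjecture; the paper only establishes it in the restricted range $p \geq p_n$ (Theorem~\ref{linear theorem}), and you correctly anticipate this at the end. However, your sketch contains a genuine gap that the paper's architecture is specifically designed to avoid, and which your plan does not circumvent. You identify the tangent/algebraic case as the main obstacle and propose to ``run a lower-dimensional instance of \eqref{kakeya} on a suitable parametrisation of $Z(P)$,'' but this is not an induction one can close: the $(n-1)$-dimensional Kakeya problem inside $Z(P)$ is no easier than the $n$-dimensional one you started with, and tracking how the $(D,N)$-polynomial Wolff axiom descends to a tangent sub-family on a curved variety, with controlled loss, is exactly the kind of quantitative bookkeeping that blows up under naive dimension induction. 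A direct $L^p$ partitioning scheme of the kind you describe is essentially the Guth--Zahl $\mathbb{R}^4$ argument, and already in that setting the tangent case required bespoke incidence-geometric input rather than a self-similar reduction.

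What the paper does instead is to replace $\bigl\|\sum_T \chi_T\bigr\|_{L^p}$ with the $k$-broad norm $\bigl\|\sum_T \chi_T\bigr\|_{\BL{k,A}^p}$, which has the crucial \emph{vanishing property} (Lemma~\ref{vanishing lemma}): the $k$-broad norm over a ball is \emph{zero} whenever the tubes passing through it all lie tangent to a transverse complete intersection of dimension $\leq k-1$. This means that once polynomial partitioning has driven the tangency dimension down to $k-1$, the algebraic contribution dies outright --- there is no lower-dimensional Kakeya problem left to solve. The algorithms of Section~\ref{structure lemma section} iterate partitioning (cellular steps shrink cells, algebraic steps drop the tangency dimension by one) precisely so as to exploit this, and the polynomial Wolff axiom enters only at the final step (Section~\ref{proof section}) as a pure \emph{counting} device: together with Wongkew's lemma it bounds $\#\T[\vec S_\ell]$ for tubes tangent to a fixed variety $S_\ell$ in a fixed $\delta_\ell$-ball, not as an $L^p$ estimate on a neighbourhood. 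The resulting $k$-broad estimate (Theorem~\ref{main theorem}) is then upgraded to the linear estimate \eqref{kakeya} via the Bourgain--Guth induction-on-scales mechanism (Proposition~\ref{Bourgain--Guth}), which is where the constraint $p \geq \frac{n-k+2}{n-k+1}$ enters and forces the compromise $p\geq p_n$. Without the $k$-broad reduction and its vanishing property, your proposal is left needing to solve a genuinely hard lower-dimensional problem in the algebraic case, which is the missing idea here.
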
 

It is easy to adapt C\'ordoba's $L^2$ argument \cite{Cordoba1977} to prove Conjecture \ref{PWAcon} for $n=2$. Guth and Zahl \cite{GZ2018} showed that in four dimensions, under the polynomial Wolff axiom, the $p \geq 3/2$ bound can be improved to $p\ge 85/57$.$^2$ In all other dimensions the Wolff bound $p \geq \frac{n+2}{n}$ provides the previous best known result under the polynomial Wolff axioms alone.  Our main result improves this range in high dimensions.

\begin{theorem}\label{linear theorem} Conjectures \ref{maximal conjecture} and \ref{PWAcon} are true in the range $p \ge p_n$, where
\begin{equation}\label{linear exponent}
p_n := 1+\min_{2\leq k\leq n}\max\Big\{\, \Big(\frac{n}{n-1}\Big)^{n-k}, \,\frac{n-1}{n-k+1}\,\Big\}\frac{1}{n-1}.
\end{equation}
\end{theorem}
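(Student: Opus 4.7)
The plan is to prove Conjecture \ref{PWAcon}, since it is strictly stronger than Conjecture \ref{maximal conjecture}. Fix an integer $k$ with $2 \leq k \leq n$ and let $\alpha_{n,k} := \max\{(n/(n-1))^{n-k},\,(n-1)/(n-k+1)\}/(n-1)$. The goal is to show that $(\mathrm{K}_p)$ holds for any $p \geq 1 + \alpha_{n,k}$; optimizing over $k$ then yields the stated exponent $p_n$. The proof proceeds by induction on the scale $\delta$, with the key mechanism being an iterated polynomial partitioning in which each successive step concentrates the problem onto an algebraic variety of one lower dimension.

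I would begin by applying Guth's polynomial partitioning in $\R^n$ to obtain a polynomial $P_1$ of controlled degree $D$ whose zero set $Z_1$ cuts $\R^n$ into $\sim D^n$ cells carrying roughly equal shares of the $L^p$ mass. Each tube $T \in \T$ is classified as \emph{cellular} (essentially confined to a single cell), \emph{transverse} (crossing $Z_1$ in few points), or \emph{tangential} (lying close to $Z_1$ at scale $\delta$). The cellular contribution is absorbed by the inductive hypothesis applied cell-by-cell with fewer tubes; the transverse contribution is handled via the B\'ezout-type bound on the number of cells a single tube can cross. The tangential tubes live in a $\delta$-neighborhood of $Z_1$, which, being a semialgebraic set of bounded complexity, has its tube count controlled by the polynomial Wolff axiom. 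I then iterate: inside the neighborhood of $Z_1$, I apply a further polynomial partitioning adapted to the geometry of $Z_1$, producing a subvariety of dimension $n-2$, and continue this descent for a total of $n-k$ steps, finishing on an algebraic variety $V$ of dimension $k$ and bounded complexity.

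At the base of the descent, a direct estimate on tubes tangential to the $k$-dimensional variety $V$ --- exploiting the polynomial Wolff axiom once more to control the number of tubes in a $\delta$-neighborhood of $V$ --- is what contributes the bound $p - 1 \geq 1/(n-k+1)$, accounting for the second term in $\alpha_{n,k}$. Meanwhile, each of the $n-k$ partitioning steps in the descent costs a multiplicative factor of $n/(n-1)$ in the required exponent, so the cumulative cost is $(n/(n-1))^{n-k}$, accounting for the first term. The main technical obstacle is formulating an induction hypothesis strong enough to survive the descent: the statement must track the current variety's dimension and algebraic complexity, the PWA constant $N$, the scale $\delta$, and the accumulated $\varepsilon$-losses, all while remaining stable under polynomial partitioning on an algebraic subvariety of arbitrary dimension. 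A further subtlety is verifying that at each intermediate level the tangential tubes are genuinely localized in the thin neighborhood of the current variety so that the next partitioning step delivers the expected $n/(n-1)$ gain; this requires a delicate analysis of the interaction between tube orientations and tangent spaces along the descent.
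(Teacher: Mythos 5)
Your plan is missing the central structural idea of the paper's proof: the reduction to \emph{$k$-broad norms}. You propose to prove the linear estimate $(\mathrm{K}_p)$ directly by an induction-on-scale combined with an iterated polynomial partitioning descent, stopping at a $k$-dimensional variety. But there is no mechanism in your scheme that forces the descent to halt at dimension $k$; polynomial partitioning applied to the raw $L^p$ norm will, when the mass concentrates near a low-dimensional variety, keep producing lower-dimensional algebraic cases with no saving, all the way to dimension zero. The paper escapes this by first establishing the $k$-broad estimate $(\BL{k}^p)$, which \emph{vanishes} whenever the tubes cluster around a $(k-1)$-dimensional transverse complete intersection (Lemma~\ref{vanishing lemma}); that vanishing is precisely what stops the descent at dimension $k$ and makes the exponent $(\frac{n}{n-1})^{n-k}$ attainable. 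Without the broad norm (or some multilinear surrogate), the iterated partitioning cannot beat Wolff's exponent in the configuration where all tubes lie in a thin slab or near a $(k-1)$-plane.

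You also misattribute the origin of the second constraint in $\alpha_{n,k}$. The requirement $p-1 \ge \frac{1}{n-k+1}$ does \emph{not} arise from a polynomial-Wolff-axiom count of tubes tangent to a $k$-dimensional variety at the base of the descent; it comes from the Bourgain--Guth mechanism (Proposition~\ref{Bourgain--Guth}) that passes from the $k$-broad estimate back to the linear estimate via a cap decomposition and parabolic rescaling, and the exponent $\frac{n-k+2}{n-k+1}$ is exactly the threshold at which the rescaling loss $\beta^{(n-k+1)p-(n-k+2)}$ stays bounded as $\beta \to 0$. Meanwhile, the factor $(\frac{n}{n-1})^{n-k}$ does not come from each partitioning step ``costing'' a naive $\frac{n}{n-1}$; in the paper it emerges only after a delicate optimization over interpolation weights $\gamma_m,\dots,\gamma_n$ and intermediate Lebesgue exponents $p_k \ge \cdots \ge p_n$ solving the linear system \eqref{linear system 2}, with the polynomial Wolff axiom and Wongkew's volume bound applied at \emph{every} level of the descent, not just at the bottom. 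As stated, your proposal lacks the two load-bearing ideas (broad norms with their vanishing property, and the broad-to-linear rescaling) and so would not produce the claimed range of exponents.
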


The range of exponents $p \geq p_n$ is larger than Wolff's when $n=5$ or $n \geq 7$. To see this, note that for any $0 < r < 1$ there exists some integer $2 \leq k \leq n$ satisfying $k\in[r(n-1)+1,r(n-1)+2)$. Writing $p_n = 1 + \alpha_n\frac{1}{n-1}$, it follows that
\begin{equation*}
\alpha_n < \inf_{0< r < 1}\max\Big\{\,\Big(1+\frac{1}{n-1}\Big)^{(n-1)(1-r)},\, \frac{1}{1-r}\,\Big\} \,\leq\,  \Omega^{-1}=1.763....  
\end{equation*}
Here the omega constant $\Omega\in(1/2,1)$ is the solution to $e^{\Omega}=\Omega^{-1}$. In particular, Theorem~\ref{linear theorem} implies that Conjecture~\ref{PWAcon} is true in the range $p \geq 1+\Omega^{-1}\frac{1}{n-1}$, yielding an improvement over Wolff's bound when $n \geq 9$. Calculating the precise value of $p_n$ for lower $n$, we find that Theorem~\ref{linear theorem} also improves the state-of-the-art for Conjecture~\ref{PWAcon} in dimensions $n=5, 7, 8$; see Figure~\ref{exponent table} for explicit values of~$p_n$.

 On the other hand, Katz and Tao~\cite{KT2002} confirmed Conjecture~\ref{maximal conjecture}  in the range $p\ge 1+\frac{7}{4}\frac{1}{n-1}$. One may refine the above observations to show that $\alpha_n \to \Omega^{-1}$ as $n$ tends to infinity and so, as $7/4=1.7 5< \Omega^{-1}$,  Theorem~\ref{linear theorem} does not constitute  an improvement for Conjecture~\ref{maximal conjecture} in high dimensions. However, by explicitly
calculating values of $p_n$, we obtain improvements for Conjecture~\ref{maximal conjecture}  in all dimensions~$n$ belonging to the following list:
 \begin{quote} 5, 7, 8, 9, 10, 11, 12, 13, 14, 15, 16, 17, 18, 19, 20, 21, 22, 23, 24, 25, 26, 27, 28, 30, 31, 32, 33, 34, 35, 37, 39, 40, 41, 42, 44, 46, 47, 48, 49, 51, 53, 55, 56, 58, 60, 62, 65, 67, 69, 72, 74, 76, 81, 83, 90,~97.\footnote{This list was compiled using the following \emph{Maple} \cite{Maple} code:

\begin{verbatim}
printlevel := 0: N := [insert dimension]:
p_broad := 1+(n/(n-1))^(n-k)/(n-1): p_limit := 1+1/(n-k+1):
p_Wolff := (n+2)/n: p_KT := 1+(7/4)/(n-1):

for n from 2 to N do
p_seq := [seq(max(eval(p_broad, k = i), eval(p_limit, k = i)), i = 2 .. n)]:
new_exponent := min(p_seq):
if new_exponent < min(p_Wolff, p_KT) then print(n) end if:
end do:
\end{verbatim}\vspace{-1em}}
\end{quote}

Finally, recall that maximal estimates imply bounds for the dimension of Kakeya sets, and in particular we obtain the following corollary.

\begin{corollary}\label{Kakeya set bounds} Kakeya sets in $\mathbb{R}^7$ have Hausdorff dimension at least $1+6^4/7^3$ and Kakeya sets in $\mathbb{R}^9$ have Hausdorff dimension at least $1 + 8^5/9^4$.
\end{corollary}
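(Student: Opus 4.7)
The plan is to combine Theorem \ref{linear theorem} with the classical reduction, mentioned just after Conjecture \ref{maximal conjecture}, which states that the $L^p$ Kakeya maximal inequality \eqref{maximal inequality} in $\R^n$ forces every Kakeya set in $\R^n$ to have Hausdorff dimension at least $p'$. Accepting this reduction as standard, the proof reduces to evaluating the minimum in \eqref{linear exponent} when $n \in \{7,9\}$ and taking conjugate exponents.

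For $n = 7$, I would compare the six candidates $\max\{(7/6)^{7-k},\,6/(8-k)\}$ for $2 \leq k \leq 7$. The minimum is attained at $k = 4$, where $(7/6)^3 = 343/216$ strictly exceeds $6/4 = 3/2$, so that
\begin{equation*}
p_7 - 1 \,=\, \frac{(7/6)^3}{6} \,=\, \frac{7^3}{6^4}.
\end{equation*}
The relevant checks---that $k = 3$ (value $(7/6)^4/6$) and $k = 5$ (value $2/6$) both strictly exceed $7^3/6^4$---are direct by monotonicity. Analogously, for $n = 9$ the minimum is attained at $k = 5$: here $(9/8)^4 = 6561/4096$ narrowly exceeds $8/5$, so that
\begin{equation*}
p_9 - 1 \,=\, \frac{(9/8)^4}{8} \,=\, \frac{9^4}{8^5}.
\end{equation*}
Again one verifies that neighbouring values of $k$ produce strictly larger expressions.

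Taking conjugate exponents via the identity $p' - 1 = 1/(p-1)$, the above computations yield $p_7' = 1 + 6^4/7^3$ and $p_9' = 1 + 8^5/9^4$. Combining Theorem \ref{linear theorem}---which secures \eqref{maximal inequality} at the exponents $p_7$ and $p_9$ in the relevant dimensions---with the Hausdorff-dimension reduction then gives the corollary.

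I do not expect any genuine obstacle: Theorem \ref{linear theorem} and the passage from maximal estimates to Hausdorff dimension bounds do all the heavy lifting, leaving only the minor arithmetic of locating the minimising $k$. The only mildly delicate point is the borderline case $n = 9$, $k = 5$, where $(9/8)^4$ and $8/5$ are close in size; a direct rational comparison of $6561/4096$ with $6553.6/4096$ confirms the inequality.
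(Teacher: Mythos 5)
Your proposal is correct and follows exactly the route the paper takes: apply Theorem \ref{linear theorem} to obtain the maximal estimate at the endpoint $p_n$, then invoke the standard reduction (cited after Conjecture \ref{maximal conjecture}) to convert this into a Hausdorff dimension lower bound of $p_n'$. Your arithmetic locating the minimising $k$ ($k=4$ for $n=7$, $k=5$ for $n=9$) and the identity $p'-1 = 1/(p-1)$ are both right, and your near-miss check $5\cdot 9^4 = 32805 > 32768 = 8^5$ correctly settles the borderline $n=9$ case; these values agree with the paper's Figure \ref{exponent table}.
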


Corollary \ref{Kakeya set bounds} improves the previous best known lower bound of $(2-\sqrt{2})(n-4)+3$, also due to Katz and Tao~\cite{KT2002} (note that this bound is better than the one that can be obtained from their maximal estimate).

As noted above, in high dimensions the Kakeya maximal bounds of Katz--Tao~\cite{KT2002} are asymptotically superior to those of Theorem \ref{linear theorem}. Nevertheless, the bounds for Conjecture \ref{maximal conjecture} given by Theorem \ref{linear theorem} are perhaps of interest in this regime since they are obtained via a completely different approach from that used in \cite{KT2002}. The Katz--Tao~\cite{KT2002} bound is proved using the \emph{sum-difference method} from additive combinatorics, building upon earlier work of Bourgain \cite{Bourgain1999}.\footnote{The sum-difference approach heavily exploits the direction separation hypothesis and therefore does not appear to yield estimates under the more general polynomial Wolff axiom hypothesis.} The proof of Theorem~\ref{linear theorem}, by contrast, is based on the~\emph{polynomial partitioning method}. This method was introduced by Guth and Katz \cite{GK2015} in their resolution of the Erd\H{o}s distance conjecture, and was inspired by Dvir's solution to a finite field analogue of the Kakeya problem~\cite{Dvir2009}. In recent years, polynomial partitioning techniques have been substantially developed so as to apply to a wide variety of problems in combinatorics and harmonic analysis; see, for instance, \cite{TS2012, Guth2016, Guth2019, Wang2018, HR2018}. In the present article,  an argument of Guth \cite{Guth2016, Guth2019}, which was previously used to study oscillatory integral operators, is adapted so as to directly apply to the Kakeya problem. The structure of the proof and the presentation of the paper follows closely that of the companion article \cite{HR2018}, where Guth's arguments were extended in the oscillatory integral context so as to take into account polynomial Wolff axiom information in all dimensions.

\renewcommand{\arraystretch}{1.3}
\renewcommand{\arraystretch}{1.3}
 \begin{figure}
\begin{tabular}{ ||c|c|c||c|c|c|| } 
 \hline
  $n$ & $p_n$  & $p\ge p_n$ & $n$ & $p_n$  & $p\ge p_n$ \\ 
   \hline 
 \cellcolor{white} 2 & 2 & C\'ordoba~\cite{Cordoba1977}  &  \cellcolor{yellow!50} 9 &  \cellcolor{yellow!50} $1 + 9^4/8^5$ &  \cellcolor{yellow!50} Theorem~\ref{linear theorem}\\ 
  \cellcolor{white} 3 & $5/3$ & Wolff~\cite{Wolff1995}  &\cellcolor{yellow!50}  10 & \cellcolor{yellow!50}   $1+10^5/9^6$ &  \cellcolor{yellow!50} Theorem~\ref{linear theorem}  \\
\cellcolor{white} 4 &   $85/57$ &   Guth--Zahl~\cite{GZ2018}  & \cellcolor{yellow!50} 11 &  \cellcolor{yellow!50} $7/6$ &  \cellcolor{yellow!50} Theorem~\ref{linear theorem}\\  
\cellcolor{yellow!50} 5 & \cellcolor{yellow!50} $1+5^2/4^3$ & \cellcolor{yellow!50} Theorem~\ref{linear theorem}    & \cellcolor{yellow!50}  12 & \cellcolor{yellow!50}   $1+12^6/11^7$ &  \cellcolor{yellow!50} Theorem~\ref{linear theorem}   \\ 
\cellcolor{white} 6 & 4/3 & Wolff~\cite{Wolff1995}  &   \cellcolor{yellow!50}  13 &  \cellcolor{yellow!50} $8/7$ &  \cellcolor{yellow!50} Theorem~\ref{linear theorem}\\ 
 \cellcolor{yellow!50} 7 &  \cellcolor{yellow!50} $1+7^3/6^4$ &  \cellcolor{yellow!50} Theorem~\ref{linear theorem}  &  \cellcolor{yellow!50}  14 & \cellcolor{yellow!50}   $1+14^7/13^8$ &  \cellcolor{yellow!50} Theorem~\ref{linear theorem}   \\ 
\cellcolor{yellow!50}  8 & \cellcolor{yellow!50}   $1+8^4/7^5$ &  \cellcolor{yellow!50} Theorem~\ref{linear theorem}   &  \cellcolor{yellow!50}  15 & \cellcolor{yellow!50}   $1+15^8/14^9$&  \cellcolor{yellow!50} Theorem~\ref{linear theorem}\\ 
  \hline
\end{tabular}
    \caption{The current state-of-the-art for Conjectures~\ref{maximal conjecture}  and~\ref{PWAcon} in low dimensions. New results are \colorbox{yellow!50}{highlighted} and are deduced by combining Theorem~\ref{main theorem} with Proposition~\ref{Bourgain--Guth}.
    }
    \label{exponent table}
\end{figure}

The article is organised as follows:
\begin{itemize}
    \item After fixing some notation in Section \ref{notation section}, in Section \ref{reduction section} the problem is reduced to estimating the so-called \emph{$k$-broad norms} for the Kakeya maximal function, paralleling work on oscillatory integrals from~\cite{BG2011, Guth2016, Guth2019, HR2018}.
    \item In Sections~\ref{k-broad norms section} and \ref{polynomial partitioning section}, the basic tools for the proof of Theorem~\ref{linear theorem} are recalled from the literature and, in particular, the theory of $k$-broad norms and the polynomial partitioning theorem from~\cite{Guth2019} are reviewed.
   \item In Section~\ref{structure lemma section}, a recursive algorithm is described which can be interpreted as a structural statement of algebraic nature concerning extremal configurations of tubes for the Kakeya problem. 
    \item In Section~\ref{proof section}, the polynomial Wolff axioms are combined with the recursive algorithm to conclude the proof.
\end{itemize}

\begin{acknowledgement} The first author thanks both Larry Guth and Joe Karmazyn for helpful discussions during the development of this project.  
\end{acknowledgement}




\section{Notational conventions}\label{notation section}

 We call an $n$-dimensional ball $B_r$ of radius~$r$ an {\it $r$-ball}. The intersection of $S^{n-1}$ with a ball is called a \textit{cap}. The $\delta$-neighbourhood of a set $E$ will be denoted by $N_{\;\!\!\delta}E$.

The arguments will involve the {\it admissible parameters} $n$, $p$ and $\varepsilon$ and the constants in the estimates will be allowed to depend on these quantities. Moreover, any constant is said to be \textit{admissible} if it depends only on the admissible parameters. Given positive numbers $A, B \geq 0$ and a list of objects $L$, the notation $A \lesssim_L B$, $B \gtrsim_L A$ or $A = O_L(B)$ signifies that $A \leq C_L B$ where $C_L$ is a constant which depends only on the objects in the list and the admissible parameters. We write $A \sim_L B$ when both $A \lesssim_L B$ and $B \lesssim_L A$.

 The cardinality of a finite set $A$ is denoted by $\#A$. A set $A'$ is said to be a \emph{refinement} of $A$ if $A' \subseteq A$ and $\#A' \gtrsim \#A$. In many cases it will be convenient to \emph{pass to a refinement} of a set $A$, by which we mean that the original set $A$ is replaced with some refinement. 




\section{Reduction to $k$-broad estimates}\label{reduction section}

Rather than attempt to prove~\eqref{kakeya} directly, it is useful to work with a class of weaker inequalities known as \emph{$k$-broad estimates}. This type of inequality was introduced by Guth~\cite{Guth2016, Guth2019} in the context of oscillatory integral operators (and, in particular, the Fourier restriction conjecture) and was inspired by the earlier multilinear theory developed in~\cite{BCT2006} (see also \cite{Bennett2014} for a detailed discussion of multilinear Kakeya inequalities or Proposition \ref{k-broad vrs k-linear} below for a precise statement relating the $k$-broad and $k$-linear theory). 

In order to introduce the $k$-broad estimates, we decompose the unit sphere $S^{n-1}$ into finitely-overlapping caps $\tau$ of diameter~$\beta$, an admissible constant satisfying $\delta \ll \beta \ll 1$. We then perform a corresponding decomposition of $\T$ by writing the family as a disjoint union of subcollections
\begin{equation*}
    \T = \bigcup_{\tau} \T[\tau]
\end{equation*} 
where each $\T[\tau]$ satisfies $\mathrm{dir}(T) \in \tau$ for all $T \in \T[\tau]$. The ambient euclidean space is also decomposed into tiny balls $B_{\;\!\!\delta}$ of radius~$\delta$. In particular, fix $\mathcal{B}_{\delta}$ a collection of finitely-overlapping $\delta$-balls which cover $\R^n$. For $B_{\;\!\!\delta} \in \mathcal{B}_{\delta}$ define
\begin{equation*}
\mu_{\T}(B_{\;\!\!\delta}) := \min_{V_1,\dots,V_{\!A} \in \mathrm{Gr}(k-1, n)} \Bigg(\max_{\substack{\tau : \angle(\tau, V_a) > \beta \\ \textrm{for } 1 \leq a \leq A}} \Big\|\sum_{T \in \T[\tau]} \chi_T\Big\|_{L^p(B_{\;\!\!\delta})}^p \Bigg),
\end{equation*}
where $A \in \N$ and $\mathrm{Gr}(k-1, n)$ is the Grassmannian manifold of all $(k-1)$-dimensional subspaces in $\R^n$. Here $\angle(\tau, V_a)$ denotes the infimum of the (unsigned) angles $\angle(v,v')$  over all pairs of non-zero vectors $v \in \tau$ and $v' \in V_a $. For $U\subseteq \R^n$ the \emph{$k$-broad norm over $U$} is then defined to be
\begin{equation*}
\Big\|\sum_{T \in \T} \chi_T\Big\|_{\mathrm{BL}^p_{k,A}(U)} := \Bigg(\sum_{\substack{B_{\;\!\!\delta} \in \mathcal{B}_{\delta} }}\frac{|B_{\;\!\!\delta}\cap U|}{|B_{\;\!\!\delta}|} \mu_{\T}(B_{\;\!\!\delta}) \Bigg)^{1/p}.
\end{equation*}
The $k$-broad norms are \emph{not} norms in any familiar sense, but they do satisfy weak analogues of various properties of $L^p$-norms. The basic properties of these objects are described in Section~\ref{k-broad norms section} below.

The main ingredient in the proof of Theorem~\ref{linear theorem} is the following estimate for $k$-broad norms.

\begin{theorem}\label{main theorem} Let   $p\ge1+\frac{1}{n-1}(\frac{n}{n-1})^{n-k}$. For all $\varepsilon>0$,  there is an $A\sim 1$ and a complexity $D\in \mathbb{N}$ such that
\begin{equation}\label{broad estimate}\tag{$\BL{k}^p$}
    \Big\|\sum_{T\in \T}\chi_{T}\Big\|_{\mathrm{BL}^p_{k,A}(\R^n)}\,\lesssim\, N^{1-1/p}\delta^{-(n-1 - n/p)-\varepsilon}\Big(\sum_{T \in \T} |T| \Big)^{1/p}\end{equation}
whenever $0<\delta<1$, $N\ge 1$, and $\mathbb{T}$ satisfies the $(D,N)$-polynomial Wolff axiom. 
\end{theorem}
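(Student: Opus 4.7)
The plan is to run the polynomial partitioning induction of Guth \cite{Guth2016, Guth2019}, in the form adapted to the Kakeya setting in the companion paper \cite{HR2018}, with a double induction on the scale $\delta$ and on the number of tubes $\#\T$. Standard pigeonholing first lets us assume the tubes are organised so that each one contributes roughly the same amount to $\mu_{\T}$ and so that the relevant caps $\tau$ all contribute comparably; these reductions cost only logarithmic factors, absorbed by $\delta^{-\varepsilon}$.

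At the heart of the argument is a dichotomy obtained by applying the polynomial partitioning theorem to $\sum_{T \in \T} \chi_T$, producing a polynomial $P$ of controlled degree whose zero set $Z(P)$ either (i) partitions $\R^n$ into $\sim D^n$ cells each containing a strictly smaller subfamily of tubes, or (ii) concentrates a definite fraction of the $k$-broad mass in a thin neighbourhood $N_{\rho}Z(P)$. In the \emph{cellular case} one simply applies the inductive hypothesis inside each cell; since a direction-separated or Wolff-type family of $\delta$-tubes intersects only $O(D^{n-1})$ of the cells on average, the induction closes provided $p \geq \frac{n}{n-1}$. In the \emph{algebraic case} one sorts the tubes into those \emph{transverse} to $Z(P)$ and those \emph{tangent} to $Z(P)$: transverse tubes meet $N_{\rho}Z(P)$ in short segments and are handled by a standard transverse estimate together with the $k$-broad triangle inequality, while tangent tubes cluster around the $(n-1)$-dimensional variety $Z(P)$ and are fed back into the argument at one dimension lower. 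This is where the recursive algorithm of Section~\ref{structure lemma section} comes in: iterating the tangency step produces a nested sequence of semialgebraic sets of bounded complexity $D$, of dimensions $n-1, n-2, \dots, k$, inside which the tubes lie tangentially.

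After $n-k$ tangency steps the problem is reduced to a configuration of tubes tangent to a $k$-dimensional semialgebraic set of complexity $\le D$, and this is precisely the regime in which the $(D,N)$-polynomial Wolff axiom supplies a sharp count on the number of tubes that can fit: the axiom yields an $L^p$ bound with constant $N^{1-1/p}$ provided $p$ exceeds the natural threshold $\frac{k}{k-1}$ coming from a trivial argument at dimension $k$. One then unwinds the recursion: at each of the $n-k$ tangential steps one loses a factor of $\tfrac{n}{n-1}$ in the balance between the cellular estimate and the algebraic contribution, so the condition that must hold for the induction to close at level $j$ is that $p-1$ exceeds $\tfrac{1}{n-1}(\tfrac{n}{n-1})^{j}$; taking $j = n-k$ gives exactly the hypothesis $p \ge 1 + \tfrac{1}{n-1}(\tfrac{n}{n-1})^{n-k}$ in the statement. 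The $A \sim 1$ quantifier arises because at each algebraic step one must discard the $k-1$ directions tangent to the evolving variety, so finitely many such exclusions suffice, all bounded by an admissible constant depending on $\varepsilon$ and the degree $D$.

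The main technical obstacle I expect is controlling the bookkeeping across the recursion. At each tangency step the relevant variety changes and its complexity must be kept bounded by a single constant $D = D_{\varepsilon,n}$, independent of the iteration, in order for the polynomial Wolff axiom to apply at the terminal step. This requires the structural recursive algorithm of Section~\ref{structure lemma section} to produce varieties in a uniformly controlled semialgebraic family, which in turn forces a careful choice of the partitioning polynomial degree and of the refinement parameters. A secondary difficulty is handling the transverse contribution at each level, since the $k$-broad norm is only quasi-subadditive; this is dealt with as in \cite{Guth2019, HR2018} by absorbing the transverse loss into the $\delta^{-\varepsilon}$ factor via the induction on scale, using that $\rho \gg \delta$ so the transverse tubes are at a genuinely larger scale and the inductive hypothesis applies with room to spare.
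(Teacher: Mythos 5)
Your high-level outline — polynomial partitioning, a cellular/algebraic dichotomy, iterating the algebraic case to reduce dimension, and then invoking the polynomial Wolff axiom — matches the architecture of the paper. But the key quantitative mechanism is missing, and the place you have the polynomial Wolff axiom entering the argument is not where it actually does the work, so the derivation of the exponent $1 + \tfrac{1}{n-1}(\tfrac{n}{n-1})^{n-k}$ is unjustified.

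Concretely, you say the polynomial Wolff axiom is applied once, at the terminal step where the tubes sit tangent to a $k$-dimensional variety, and that this alone yields a constant $N^{1-1/p}$ ``provided $p$ exceeds the natural threshold $\frac{k}{k-1}$.'' That is not what happens. First, the recursion of Section~\ref{structure lemma section} need not terminate at dimension $k$; it stops at some $m$ with $k \le m \le n$, and the argument must close for every such $m$. Second, and more importantly, the paper applies the polynomial Wolff axiom at \emph{every} intermediate dimension $m \le \ell \le n$: for each $\ell$, the tubes $\T[\vec S_\ell]$ lie tangent to an $\ell$-dimensional transverse complete intersection $S_\ell$ inside a $\delta_\ell$-ball, so the polynomial Wolff axiom combined with Wongkew's volume lemma gives $\# \T[\vec S_\ell] \lesssim N\delta^{-(n-1)}(\delta_\ell/\delta)^{-(n-\ell)}$, and Property 3 propagates this down to the terminal cells. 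This produces $n-m+1$ different bounds on $\max_{O} \#\T[O]$, and the ``second key estimate'' is a \emph{weighted geometric mean} of these with weights $\gamma_m,\dots,\gamma_n$. The numerology you state then emerges by solving the linear system $X_i = Y_i = 0$ (with $p_m = m/(m-1)$ as the terminal exponent), which forces $\gamma_j = \tfrac1n(1-\tfrac1n)^{j-m}$ and gives $(1-1/p)^{-1} = 1 + (n-1)(1-1/n)^{n-k}$. Your narrative that ``one loses a factor of $n/(n-1)$ at each tangential step'' reproduces the answer post hoc but does not correspond to any single step of the argument; without the multi-scale geometric mean over all $\ell$, you would only be free to use the $\ell=k$ bound, and the resulting exponent would be strictly worse.

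Two smaller inaccuracies worth flagging: the polynomial Wolff axiom is a cardinality bound for tubes nearly contained in a semialgebraic set, not an $L^p$ estimate, so it has no built-in ``threshold $p > k/(k-1)$''; and a tube meets at most $O(D)$ cells (B\'ezout), not $O(D^{n-1})$. The $A\sim 1$ claim is also somewhat off: $A$ must be taken of size $2^{\varepsilon_\circ^{-2}}$ so that the halvings $A_j = 2^{-\#_{\sta}(j)}A$ across the $O(\varepsilon_\circ^{-1}\log\varepsilon_\circ^{-1})$ algebraic steps remain integers; it is not about discarding $k-1$ tangent directions.
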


The proof of Theorem~\ref{main theorem}, which is based on the polynomial partitioning method and closely follows the arguments of \cite{Guth2016, Guth2019, HR2018}, will be presented in Sections~\ref{k-broad norms section}--\ref{proof section}. 

The key feature which distinguishes the $k$-broad norm from its $L^p$ counterpart is that the former vanishes whenever the tubes of $\T$ cluster around a $(k-1)$-dimensional set (see Lemma~\ref{vanishing lemma} for a precise statement of this property). Owing to this special behaviour, the inequality~\eqref{broad estimate} is substantially weaker than~\eqref{kakeya}. Nevertheless, a mechanism introduced by Bourgain and Guth~\cite{BG2011} allows one to pass from $k$-broad to linear estimates, albeit under a rather stringent condition on the exponent. 

\begin{proposition}[Bourgain--Guth~\cite{BG2011}, Guth~\cite{Guth2019}]\label{Bourgain--Guth} 
Let  
$
 p \ge \frac{n-k+2}{n-k+1}$,  $\varepsilon > 0$, $A \sim 1$ and $D\in\mathbb{N}$. Suppose that
\begin{equation}\label{broad again}\tag{$\BL{k}^p$}
    \Big\|\sum_{T\in \T}\chi_{T}\Big\|_{\mathrm{BL}^p_{k,A}(\R^n)}\,\lesssim\, N^{1-1/p}\delta^{-(n-1 - n/p)-\varepsilon}\Big(\sum_{T \in \T} |T| \Big)^{1/p}
    \end{equation}
whenever $0<\delta<1$, $N\ge 1$, and $\mathbb{T}$ satisfies the $(D,N)$-polynomial Wolff axiom. Then
 \begin{equation}\label{linear again}\tag{$\mathrm{K}_p$}
\Big\| \sum_{T\in\mathbb{T}} \chi_T\Big\|_{L^p(\mathbb{R}^n)}\,\lesssim\, N^{1-1/p}\delta^{-(n-1 - n/p)-\varepsilon}\Big(\sum_{T \in \T} |T| \Big)^{1/p}
\end{equation}
whenever $0<\delta<1$, $N\ge 1$, and $\mathbb{T}$ satisfies the $(D,N)$-polynomial Wolff axiom.
 \end{proposition}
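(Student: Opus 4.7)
The plan is to adapt the standard Bourgain--Guth broad-narrow argument to the Kakeya setting, using the polynomial Wolff axioms to control narrow tube families. The whole scheme runs as an induction on the scale $\delta^{-1}$, with an intermediate angular parameter $\beta = \delta^\gamma \in (\delta, 1)$ to be chosen at each inductive step.

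For each $B_\delta \in \mathcal{B}_\delta$, I would take subspaces $V_1(B_\delta), \dots, V_A(B_\delta) \in \mathrm{Gr}(k-1, n)$ realising the minimum in the definition of $\mu_\T(B_\delta)$. Partitioning the caps $\tau$ into \emph{broad} caps (those with $\angle(\tau, V_a(B_\delta)) > \beta$ for every $a$) and \emph{narrow} caps (the rest) yields a pointwise splitting of $\sum_T \chi_T$ over $B_\delta$. Since there are at most $O(\beta^{-(n-1)})$ caps in total, H\"older's inequality gives the broad bound
$$
\Big\|\sum_{\tau\text{ broad}}\sum_{T \in \T[\tau]}\chi_T\Big\|_{L^p(B_\delta)}^p \lesssim \beta^{-p(n-1)} \mu_\T(B_\delta),
$$
which, when summed over $B_\delta$ and combined with the hypothesis $(\BL{k}^p)$, produces the desired estimate up to a loss $\beta^{-p(n-1)}$.

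For the narrow contribution, the key point is that tubes in $\T[\tau]$ with $\tau$ in the $\beta$-neighbourhood of some $V_a$ passing through $B_\delta$ lie in a slab $E_{B_\delta, V_a}$, namely a $\beta$-thickening of $B_\delta + V_a$ truncated to a bounded neighbourhood of $B_\delta$. This slab is a semialgebraic set of bounded complexity with $|E_{B_\delta, V_a}| \sim \beta^{n-k+1}$, so the polynomial Wolff axiom (applied with $\lambda = 1$) bounds the number of such tubes by $O(N \delta^{-(n-1)}\beta^{n-k+1})$. Combining this count with the trivial $L^\infty$ bound $\|\sum\chi_T\|_{L^p(B_\delta)}^p \leq \|\sum\chi_T\|_{L^\infty(B_\delta)}^{p-1}\|\sum\chi_T\|_{L^1(B_\delta)}$ and summing over $B_\delta$, I would obtain
$$
\Big\|\sum_{\tau \text{ narrow}}\sum_{T \in \T[\tau]}\chi_T\Big\|_{L^p(\R^n)}^p \lesssim \bigl(N\delta^{-(n-1)}\beta^{n-k+1}\bigr)^{p-1}\sum_{T \in \T}|T|.
$$
Direct comparison with the $p$-th power of the right-hand side of $(\mathrm{K}_p)$ shows that this narrow bound is acceptable precisely when $\beta^{(n-k+1)(p-1)} \lesssim \delta^{1-O(\varepsilon)}$, which is consistent with $\beta > \delta$ exactly when $(n-k+1)(p-1) \ge 1$, i.e., when $p \ge \frac{n-k+2}{n-k+1}$.

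The hard part will be closing the argument at the endpoint $p = \frac{n-k+2}{n-k+1}$, because a single application of the broad-narrow decomposition puts contradictory requirements on $\gamma$: the broad piece demands $\gamma$ small (so the $\beta^{-p(n-1)}$ loss is mild), while the narrow piece demands $\gamma$ close to $1$ (so $\beta^{(n-k+1)(p-1)} \lesssim \delta$). Following Bourgain--Guth and Guth, the resolution is to iterate the broad-narrow decomposition as a scale-induction, permitting only a small $\delta^{\varepsilon'}$ gain per step, so that after $O(1/\varepsilon')$ iterations the accumulated broad losses are absorbed into a single $\delta^{-\varepsilon}$ factor. The main technical point will be verifying that this induction closes uniformly, with the complexity $D$ and the polynomial Wolff constant $N$ propagating correctly through the iteration, so that the endpoint exponent is attained with admissible constants.
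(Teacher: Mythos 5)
Your decomposition into broad and narrow caps, and your broad bound $\beta^{-p(n-1)}\mu_{\T}(B_{\;\!\!\delta})$, match the paper. The departure — and the gap — is in the narrow part. You bound the narrow contribution by counting tubes in a slab via the polynomial Wolff axiom (with $\lambda = 1$) and then interpolating with $L^\infty$. As you yourself compute, this count forces $\beta \lesssim \delta^{1/((n-k+1)(p-1))}$, which at (and near) the threshold exponent $p = \frac{n-k+2}{n-k+1}$ requires $\beta \sim \delta$ and hence destroys the broad bound. Iterating a $\beta = \delta^{\gamma}$-decomposition finitely many times, as you suggest, does not cure this: the contradictory requirements on $\gamma$ reappear at every step, and a direct-count bound for the narrow part gives no scale on which to set up a genuine recursion.

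The ingredient the paper uses instead is an \emph{anisotropic rescaling} of the narrow tubes. Taking $\beta$ to be a \emph{fixed} small admissible constant (not a power of $\delta$), and applying Hölder over the $\sim\beta^{-(k-2)}$ narrow caps near each $V_a$, the narrow term becomes $\beta^{-(k-2)(p-1)}\sum_{\tau}\|\sum_{T\in\T[\tau]}\chi_T\|_{L^p}^p$. For a single cap $\tau$, the linear map $L$ that dilates by $\beta^{-1}$ in the $(n-1)$ directions orthogonal to the centre of $\tau$ sends $\T[\tau]$ to a family of $\tilde\delta$-tubes with $\tilde\delta := \beta^{-1}\delta$, and this rescaled family again satisfies the $(D,N)$-polynomial Wolff axiom. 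One then applies the \emph{inductive hypothesis}, i.e.\ the full estimate $(\mathrm{K}_p)$ at scale $\tilde\delta$, rather than a crude count. The resulting narrow contribution carries a factor $\beta^{e(p,n,k)+p\varepsilon}$ with $e(p,n,k)=(n-k+1)p-(n-k+2)$. The hypothesis $p\ge\frac{n-k+2}{n-k+1}$ is exactly what makes $e(p,n,k)\ge0$, so this factor is a \emph{gain}, and one can fix $\beta$ small enough that the narrow part is absorbed and the induction closes with a fixed constant $\mathbf{C}$. The polynomial Wolff axiom actually enters the paper's proof only at the base case (scale $\delta\sim1$, where it bounds $\#\{T:T\subset 3B\}\lesssim N$) and in checking that the rescaled family still satisfies the axiom — not in bounding the narrow contribution by a slab count. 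In short: you have the right broad estimate, but you are missing the rescaling-and-recursion mechanism for the narrow term, which is precisely where the endpoint range $p \ge \frac{n-k+2}{n-k+1}$ comes from.
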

 
 Thus, combining Theorem~\ref{main theorem} and Proposition~\ref{Bourgain--Guth} yields Theorem~\ref{linear theorem}. In contrast with the range of Lebesgue exponents in Theorem~\ref{main theorem}, the range in which Proposition~\ref{Bourgain--Guth} applies shrinks as $k$ increases. The optimal compromise between the constraints in Theorem~\ref{main theorem} and Proposition~\ref{Bourgain--Guth} is given by \eqref{linear exponent}.

We end this section with a proof of Proposition~\ref{Bourgain--Guth}, which is a minor modification of the argument in~\cite{BG2011} (see also~\cite{Guth2019}).

\begin{proof}[Proof (of Proposition~\ref{Bourgain--Guth})] The proof is by an induction-on-scale argument. 

For the base case, fix $\delta\sim 1$ and let $\T$ be a family of $\delta$-tubes satisfying the $(D,N)$-polynomial Wolff axiom. If $\mathcal{B}$ is a cover of $\R^n$ by finitely-overlapping balls of radius 1, then
$$
 \Bigl\|\sum_{T\in \T}\chi_{T}\Big\|_{L^p(\R^n)}^p\,\leq\,     \sum_{B \in \mathcal{B}} \Bigl\|\sum_{\substack{T\in \T\\ T \cap B \neq \emptyset}}\chi_{T}\Big\|_{L^p(B)}^p \,\lesssim\,  \sum_{B \in \mathcal{B}} \#\{T \in \T : T \subset 3B\}^p.
$$
The polynomial Wolff axiom hypothesis implies that $\#\{T \in \T : T \subset 3B\} \lesssim N$ for each $B \in \mathcal{B}$ and so \eqref{linear again} follows from H\"older's inequality and the fact that any tube $T \in \T$ can belong to at most $O(1)$ of the balls $3B$. 

 Now let $\mathbf{C}$ be a fixed constant, chosen sufficiently large so as to satisfy the requirements of the forthcoming argument, and fix some small $\delta>0$.

\vspace{0.5em}
\noindent{\it Induction hypothesis:} Suppose the inequality
\begin{equation*}
    \Bigl\|\sum_{\widetilde{T}\in \widetilde{\T}}\chi_{\widetilde{T}}\Big\|_{L^p(\R^n)}\,\leq\,    \mathbf{C} N^{1-1/p} \tilde{\delta}^{-(n-1-n/p)-\varepsilon}\Big(\sum_{\widetilde{T} \in \widetilde{\T}} |\widetilde{T}|\Big)^{1/p}
\end{equation*}
holds whenever $\tilde{\delta}\in[2\delta ,1)$, $N\ge 1$,  and $\widetilde{\T}$ is a family of $\tilde{\delta}$-tubes satisfying the $(D,N)$-polynomial Wolff axiom.
\vspace{0.5em}

\noindent

Let $\T$ be a family of $\delta$-tubes satisfying the $(D,N)$-polynomial Wolff axiom. Fix a $\delta$-ball $B_{\;\!\!\delta} \in \mathcal{B}_{\delta}$ and subspaces $V_1, \dots, V_{\!A} \in \mathrm{Gr}(n,k-1)$ which obtain the minimum in the definition of $\mu_{\T}(B_{\;\!\!\delta})$; thus
\begin{equation*}
    \mu_{\T}(B_{\;\!\!\delta}) = \max_{\substack{\tau : \angle(\tau, V_a) > \beta \\ \textrm{for } 1 \leq a \leq A}} \Big\|\sum_{T \in \T[\tau]} \chi_T\Big\|_{L^p(B_{\;\!\!\delta})}^p .
\end{equation*}
Since $A \sim 1$ and $\#\{\tau : \angle(\tau, V_a) \leq \beta\} \sim \beta^{-(k-2)}$, by the triangle inequality followed by H\"older's inequality, 
\begin{align*}
    \int_{B_{\;\!\!\delta}}\big|\sum_{T \in \T} \chi_T\big|^p &\,\lesssim\, \int_{B_{\;\!\!\delta}}\big|\sum_{\substack{\tau : \angle(\tau, V_a) > \beta \\ \textrm{for } 1 \leq a \leq A}}\sum_{T \in \T[\tau]} \chi_T\big|^p + \sum_{a = 1}^A \int_{B_{\;\!\!\delta}}\big|\sum_{\tau : \angle(\tau, V_a) \leq \beta}\sum_{T \in \T[\tau]} \chi_T\big|^p \\
    &\,\lesssim\, \beta^{-(n-1)p} \mu_{\T}(B_{\;\!\!\delta}) + \beta^{-(k-2)(p-1)}\sum_{\tau}\int_{B_{\;\!\!\delta}}\big|\sum_{T \in \T[\tau]} \chi_T\big|^p.
\end{align*}
Summing the estimate over all the balls $B_{\;\!\!\delta} \in \mathcal{B}_{\delta}$, we find  that
\begin{equation*}
    \Big\|\sum_{T \in \T} \chi_T\Big\|_{L^p(\R^n)}^p \!\lesssim \beta^{-(n-1)p} \Big\|\sum_{T \in \T} \chi_T\Big\|_{\BL{k,A}^p(\R^n)}^p\! + \beta^{-(k-2)(p-1)}\sum_{\tau}\Big\|\sum_{T \in \T[\tau]} \!\!\chi_T\Big\|_{L^p(\R^n)}^p.
\end{equation*}

The first term on the right-hand side of the above display is estimated using the hypothesised broad estimate. For the second term, we apply a linear rescaling $L \colon \R^n \to \R^n$ so that
\begin{equation}\label{this}
    \Big\|\sum_{T \in \T[\tau]} \chi_T\Big\|^p_{L^p(\R^n)} = \beta^{n-1}\Big\|\sum_{T \in \T[\tau]} \chi_{L(T)}\Big\|^p_{L^p(\R^n)}
\end{equation}
where $\{L(T) : T \in \T[\tau]\}$ is essentially a collection of $\tilde{\delta}$-tubes with $\tilde{\delta} := \beta^{-1}\delta$. To be more precise, let $\omega \in S^{n-1}$ denote the centre of the cap $\tau$ and choose $L$ so that it fixes the 1-dimensional space spanned by $\omega$ and acts as a dilation by a factor of $\beta^{-1}$ on the orthogonal complement $\omega^{\perp}$. Writing $x \in \R^n$ as $x = (x',x_n)$ with $x' \in \omega^{\perp}$, for any $T \in \T[\tau]$ with $v := \mathrm{dir}(T)$ there exists some $u \in \R^n$ such that 
\begin{equation*}
    T \subseteq \big\{ x \in \R^n : |x' - u' - tv'| \lesssim \delta \textrm{ for some $|t| \leq 1$ and } |x_n - u_n| \leq 1/2 \big\},
\end{equation*}
Applying $L$ one obtains
\begin{equation*}
   L(T) \subseteq \big\{ y \in \R^n : |y' - \beta^{-1}u' - t\beta^{-1}v'| \lesssim \beta^{-1}\delta \textrm{ for some $|t| \leq 1$ and } |y_n - u_n| \leq 1/2 \big\}
\end{equation*}
and the right-hand side can be covered by a bounded number of $\tilde{\delta}$-tubes. Furthermore, the defining inequality of the polynomial Wolff axiom  is essentially invariant under this scaling and the family of $\tilde{\delta}$-tubes $L(T)$ continue to  satisfy the $(D, N)$-polynomial Wolff axiom.

Combining~\eqref{this} with the induction hypothesis we find that 
\begin{equation*}
 \Big\|\sum_{T \in \T[\tau]} \chi_T\Big\|^p_{L^p(\R^n)}   \,\lesssim\,   \beta^{n-1}\mathbf{C}^pN^{p-1}  (\beta^{-1}\delta)^{-(n-1)p+n-p\varepsilon}(\beta^{-1}\delta)^{n-1}\#\T[\tau].
\end{equation*}
Recalling  that $\sum_{\tau}\#\T[\tau]=\#\T$, by plugging the preceding estimate into our $L^p(\R^n)$-norm bound, 
\begin{equation*}
 \Big\|\sum_{T \in \T} \chi_T\Big\|_{L^p(\R^n)}^p  \leq\,  C \Big(\beta^{-(n-1)p} + \mathbf{C}^p\beta^{e(p,n,k)+p\varepsilon} \Big)N^{p-1} \delta^{-(n-1)p+n-p\varepsilon}\Big(\sum_{T \in \T} |T|\Big); 
\end{equation*}
here $C$ depends, amongst other things, on the implied constant in \eqref{broad again},  and 
\begin{equation*}
    e(p,n,k) := (n -k + 1)p - (n - k + 2).
\end{equation*}

By assumption,  $p\ge \frac{n-k+2}{n-k+1}$ and therefore $e(p,n,k) \ge0$. Consequently, $\beta$ may be chosen sufficiently small, depending only on the admissible parameters $n$, $p$ and~$\varepsilon$, so that
\begin{equation*}
    C \beta^{e(p,n,k)+p\varepsilon}  \,\leq\,  \frac{1}{2}.
\end{equation*}
Moreover, if $\mathbf{C}$ is chosen sufficiently large from the outset, it follows that 
\begin{equation*}
       \Big\|\sum_{T \in \T} \chi_T\Big\|_{L^p(\R^n)}^p \,\leq\,   \mathbf{C}^pN^{p-1} \delta^{-(n-1)p+n - p\varepsilon}\Big(\sum_{T \in \T} |T|\Big),
\end{equation*}
which closes the induction and completes the proof. 
\end{proof}




\section{Basic properties of the $k$-broad norms} \label{k-broad norms section}




\subsubsection*{Vanishing property} The proof of Theorem~\ref{main theorem} will involve analysing collections of tubes which enjoy certain tangency properties with respect to algebraic varieties.

\begin{definition} Given any collection of polynomials $P_1, \dots, P_{n-m} \colon \R^n \to \R$ the common zero set
\begin{equation*}
    Z(P_1, \dots, P_{n-m}) := \{x \in \R^n : P_1(x) = \cdots = P_{n-m}(x) = 0\}
\end{equation*}
will be referred to as a \emph{variety}.\footnote{Note that here, in contrast with much of the algebraic geometry literature, the ideal generated by the $P_j$ is not required to be irreducible.} Given a variety $\bZ = Z(P_1, \dots, P_{n-m})$, define its \emph{(maximum) degree} to be the number 
\begin{equation*}
\overline{\deg}\,\bZ := \max \{\deg P_1, \dots, \deg P_{n-m} \}.
\end{equation*}
\end{definition}
It will often be convenient to work with varieties which satisfy the additional property that
\begin{equation}\label{non singular variety}
\bigwedge_{j=1}^{n-m}\nabla P_j(z) \neq 0 \qquad \textrm{for all $z \in \bZ = Z(P_1, \dots, P_{n-m})$.}
\end{equation}
In this case the zero set forms a smooth $m$-dimensional submanifold of $\R^n$ with a (classical) tangent space $T_z\bZ$ at every point $z \in \bZ$. A variety $\bZ$ which satisfies~\eqref{non singular variety} is said to be an \emph{$m$-dimensional transverse complete intersection}.

\begin{definition}\label{tangent definition} Let $0 < \delta < r < 1$, $x_0 \in \R^n$ and $\bZ \subseteq \R^n$ be a transverse complete intersection. A $\delta$-tube $T \subset \R^n$ is \emph{tangent to $\bZ$ in $B(x_0, r)$} if 
\begin{enumerate}[i)]
\item $T \cap B(x_0, r) \cap N_{\;\!\!\delta}\bZ \neq \emptyset$ ;
\item If $x \in T$ and $z \in \bZ \cap B(x_0, 2r)$ satisfy $|z-x| \leq 4\delta$, then
\begin{equation*}
    \angle(\mathrm{dir}(T), T_z\bZ) \,\leq\,  c_{\tang} \frac{\delta}{r}. 
\end{equation*}
\end{enumerate}
\end{definition}

Here $0 < c_{\tang}$ is an admissible constant which is chosen small enough to ensure that, whenever i) and ii) hold, \begin{equation}\label{inclusion}T \cap B(x_0, 2 r) \subseteq N_{\;\!\!2 \delta}\bZ.\end{equation} The fact that such a choice is possible follows from a simple calculus exercise (see, for instance,~\cite[Proposition~9.2]{GHI} for details of an argument of this type).

The raison d'\^etre for the $k$-broad norms is the following lemma, which roughly states that the broad norms vanish if the tubes in $\T$ cluster around a low dimensional variety.

\begin{lemma}[Vanishing property]\label{vanishing lemma} Given $\varepsilon_{\;\!\!\circ} > 0$ and $0<\beta<1$ there exists some $0 < c < 1$ such that the following holds. Let $0 < \delta < c$, $r > \delta^{1- \varepsilon_{\;\!\!\circ}}$, $x_0 \in \R^n$ and $\bZ \subseteq \R^n$ be a transverse complete intersection of dimension at most $k-1$. Then
\begin{equation*}
    \Big\| \sum_{T \in \T} \chi_T \Big\|_{\mathrm{BL}^p_{k,A}(B(x_0, r))} = 0
\end{equation*}
whenever $\T$ is a family of $\delta$-tubes which are tangent to $\bZ$ in $B(x_0, r)$.\footnote{Here the parameter $\beta$ appears implicitly in the definition of the $k$-broad norm.}
 \end{lemma}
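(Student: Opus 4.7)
The plan is to establish the pointwise identity $\mu_{\T}(B_{\;\!\!\delta}) = 0$ for every $\delta$-ball $B_{\;\!\!\delta} \in \mathcal{B}_\delta$ that meets $B(x_0, r)$. Since the $k$-broad norm over $B(x_0, r)$ is the $p$-th root of a weighted sum of the quantities $\mu_{\T}(B_{\;\!\!\delta})$ with weight $|B_{\;\!\!\delta} \cap B(x_0, r)|/|B_{\;\!\!\delta}|$, such a pointwise vanishing is enough.

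Fix such a $B_{\;\!\!\delta}$. If no tube of $\T$ intersects $B_{\;\!\!\delta}$, then $\mu_{\T}(B_{\;\!\!\delta}) = 0$ trivially: the $L^p(B_{\;\!\!\delta})$-norm attached to every cap $\tau$ vanishes. Otherwise, pick any $T_0 \in \T$ with $T_0 \cap B_{\;\!\!\delta} \neq \emptyset$. Taking $c$ small enough that $2\delta < r$, the tangency of $T_0$ combined with the inclusion~\eqref{inclusion} places $T_0 \cap B_{\;\!\!\delta}$ inside $N_{\;\!\!2\delta}\bZ$, and yields a point $z \in \bZ \cap B(x_0, 2r)$ at distance at most $2\delta$ from $B_{\;\!\!\delta}$. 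Because $\dim \bZ \leq k-1$, the classical tangent space $T_z\bZ$ is contained in some $V_{\!\bullet} \in \mathrm{Gr}(k-1, n)$. The proposal is to use $V_1 = \cdots = V_{\!A} = V_{\!\bullet}$ in the definition of $\mu_{\T}(B_{\;\!\!\delta})$.

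It remains to check that the resulting maximum vanishes. For any $T \in \T$ with $T \cap B_{\;\!\!\delta} \neq \emptyset$, any $x \in T \cap B_{\;\!\!\delta}$ satisfies $|x - z| \leq 4\delta$ (since the distance from $z$ to $B_{\;\!\!\delta}$ is at most $2\delta$ and $B_{\;\!\!\delta}$ has diameter $2\delta$), so tangency condition~(ii) gives
\[
\angle(\mathrm{dir}(T), T_z\bZ) \,\leq\, c_{\tang}\frac{\delta}{r} \,\leq\, c_{\tang}\delta^{\varepsilon_{\;\!\!\circ}}.
\]
Choosing $c$ sufficiently small, depending on $\beta$ and $\varepsilon_{\;\!\!\circ}$, so that $c_{\tang}\delta^{\varepsilon_{\;\!\!\circ}} \leq \beta/2$, any cap $\tau$ containing $\mathrm{dir}(T)$ satisfies $\angle(\tau, V_{\!\bullet}) \leq c_{\tang}\delta/r < \beta$ (using that $\angle(\tau, V_{\!\bullet})$ is an infimum over pairs of vectors). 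Such $\tau$ are \emph{excluded} from the maximum defining $\mu_{\T}(B_{\;\!\!\delta})$; for every remaining $\tau$, no tube of $\T[\tau]$ meets $B_{\;\!\!\delta}$, and the corresponding $L^p(B_{\;\!\!\delta})$-norm is zero. Hence $\mu_{\T}(B_{\;\!\!\delta}) = 0$, as required.

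The conceptual engine is that tangency forces every tube direction appearing in $B_{\;\!\!\delta}$ to lie within angle $O(\delta/r)$ of the tangent space $T_z\bZ$, a subspace of dimension at most $k-1$; the scale hypothesis $r > \delta^{1-\varepsilon_{\;\!\!\circ}}$ ensures this neighbourhood is far narrower than $\beta$ once $\delta$ is small. The main obstacle, such as it is, amounts to bookkeeping of constants: one must pick $c$ so that the key inequality $c_{\tang}\delta^{\varepsilon_{\;\!\!\circ}} \leq \beta/2$ and the ancillary requirements (for instance $2\delta < r$, ensuring $z \in B(x_0, 2r)$) hold simultaneously.
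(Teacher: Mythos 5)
Your proof is correct and follows essentially the same route as the paper's: fix a $\delta$-ball meeting $B(x_0,r)$, use the containment~\eqref{inclusion} to produce a point $z\in\bZ\cap B(x_0,2r)$ within $2\delta$ of the ball, take $V_1=\cdots=V_{\!A}$ to be a $(k-1)$-plane containing $T_z\bZ$, and invoke tangency condition (ii) together with $r>\delta^{1-\varepsilon_{\;\!\!\circ}}$ to show every tube direction entering the ball lies within angle $\beta$ of that plane. The only cosmetic difference is that you spell out the constant $c_{\tang}\delta^{\varepsilon_{\;\!\!\circ}}$ and the ancillary requirement $2\delta<r$ explicitly, whereas the paper states these as $\lesssim\delta/r$ and a smallness condition on $\delta$.
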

 
 \begin{proof} Fix $B_{\;\!\!\delta} \in \mathcal{B}_{\delta}$ with $B_{\;\!\!\delta} \cap B(x_0, r) \neq \emptyset$. Recalling the definition of the $k$-broad norm, it suffices to show that there exists some $V \in \mathrm{Gr}(k-1,n)$ such that 
 \begin{equation*}
     \max_{\tau  : \angle(\tau, V) > \beta} \int_{B_{\;\!\!\delta}}  \big| \sum_{T \in \T[\tau]} \chi_T \big|^p = 0.
 \end{equation*}
This would follow if $V$ has the property that
\begin{equation}\label{vanishing 1}
\textrm{if $T \in \T$ satisfies $T \cap B_{\;\!\!\delta} \neq \emptyset$, then $\angle(\mathrm{dir}(T), V) \leq \beta$.}
\end{equation}

Without loss of generality, one may assume there exists some $T_0 \in \T$ such that $T_0 \cap B_{\;\!\!\delta} \neq \emptyset$ (otherwise~\eqref{vanishing 1} vacuously holds for any choice of $(k-1)$-dimensional subspace). By the containment property resulting from the tangency hypothesis,
\begin{equation*}
  T_0 \cap  B_{\;\!\!\delta}  \subseteq T_0 \cap B(x_0,2r) \subseteq N_{\;\!\!2 \delta} \bZ
\end{equation*}
and therefore there exists some $z_0 \in \bZ$ such that $|z_0 - y_0| < 2 \delta$ for some $y_0 \in T_0 \cap B_{\;\!\!\delta}$. Let $V$ be a $(k-1)$-dimensional subspace containing $T_{z_0} \bZ$. Given any $T \in \T$, if $x \in T \cap B_{\;\!\!\delta}$ then $|x - z_0| < 4\delta$ and property ii) of the tangency hypothesis implies 
\begin{equation*}
     \angle(\mathrm{dir}(T), V) \,\lesssim\, \frac{\delta}{r} . 
\end{equation*}
Since $r > \delta^{1- \varepsilon_{\;\!\!\circ}}$, it follows that $ \angle(\mathrm{dir}(T), V) \leq \beta$ provided $\delta$ is sufficiently small depending only on $\varepsilon_{\;\!\!\circ}$ and $\beta$, which completes the proof.  
 \end{proof}



\subsubsection*{Triangle and logarithmic convexity inequalities} The $k$-broad norms satisfy weak  variants of certain key properties of $L^p$-norms.

\begin{lemma}[Finite subadditivity] Let $U_1, U_2 \subseteq \R^n$,  $1 \leq p < \infty$ and $A\in \mathbb{N}$. Then
\begin{equation*}
\Big\|\sum_{T \in \T} \chi_T\Big\|_{\mathrm{BL}^p_{k,A}(U_1 \cup U_2)}^p \,\leq\,  \Big\|\sum_{T \in \T} \chi_T\Big\|_{\mathrm{BL}^{p}_{k,A}(U_1)}^p + \Big\|\sum_{T \in \T} \chi_T\Big\|_{\mathrm{BL}^{p}_{k,A}(U_2)}^p
\end{equation*} 
whenever $\T$ is a family of $\delta$-tubes.
\end{lemma}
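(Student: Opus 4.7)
The plan is to observe that the quantity $\mu_{\T}(B_{\;\!\!\delta})$ appearing in the definition of the $k$-broad norm depends only on $\T$ and on $B_{\;\!\!\delta}$ itself, and has no dependence whatsoever on the ambient set $U$; the set $U$ enters the formula for the norm purely through the weight $|B_{\;\!\!\delta} \cap U|/|B_{\;\!\!\delta}|$. Consequently the whole statement reduces to a pointwise (in $B_{\;\!\!\delta}$) subadditivity of this weight together with the positivity of $\mu_{\T}(B_{\;\!\!\delta})$.

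More concretely, I would expand both sides using the definition and then raise the inequality to the $p$-th power so as to dispose of the $(\,\cdot\,)^{1/p}$, reducing the claim to
\begin{equation*}
\sum_{B_{\;\!\!\delta} \in \mathcal{B}_{\delta}} \frac{|B_{\;\!\!\delta} \cap (U_1 \cup U_2)|}{|B_{\;\!\!\delta}|}\,\mu_{\T}(B_{\;\!\!\delta}) \,\leq\, \sum_{i=1}^{2} \sum_{B_{\;\!\!\delta} \in \mathcal{B}_{\delta}} \frac{|B_{\;\!\!\delta} \cap U_i|}{|B_{\;\!\!\delta}|}\,\mu_{\T}(B_{\;\!\!\delta}).
\end{equation*}
Using $\chi_{U_1 \cup U_2} \leq \chi_{U_1} + \chi_{U_2}$ pointwise and integrating over $B_{\;\!\!\delta}$ yields
\begin{equation*}
|B_{\;\!\!\delta} \cap (U_1 \cup U_2)| \,\leq\, |B_{\;\!\!\delta} \cap U_1| + |B_{\;\!\!\delta} \cap U_2|,
\end{equation*}
and, since $\mu_{\T}(B_{\;\!\!\delta}) \geq 0$, multiplying by $\mu_{\T}(B_{\;\!\!\delta})/|B_{\;\!\!\delta}|$ and summing over $B_{\;\!\!\delta} \in \mathcal{B}_{\delta}$ produces the desired inequality.

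There is no genuine obstacle here; the only thing to be careful about is not to confuse this with genuine triangle-type subadditivity, which would involve splitting $\T$ (rather than $U$) and which fails in the usual $L^p$ sense for $k$-broad norms — this is precisely why the lemma is phrased as subadditivity in the domain of integration with a $p$-th power on both sides, and why the passage through the elementary inequality $|B_{\;\!\!\delta} \cap (U_1 \cup U_2)| \leq |B_{\;\!\!\delta} \cap U_1| + |B_{\;\!\!\delta} \cap U_2|$ suffices.
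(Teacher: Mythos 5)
Your argument is correct and is exactly the proof the paper has in mind (the paper defers to the identical argument in Guth's restriction paper rather than spelling it out). The key observation — that $\mu_{\T}(B_{\;\!\!\delta})$ is independent of $U$, so the whole inequality reduces to the pointwise subadditivity $|B_{\;\!\!\delta}\cap(U_1\cup U_2)|\le |B_{\;\!\!\delta}\cap U_1|+|B_{\;\!\!\delta}\cap U_2|$ weighted by the non-negative $\mu_{\T}(B_{\;\!\!\delta})$ — is precisely what makes the lemma immediate.
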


\begin{lemma}[Triangle inequality]\label{triangle inequality lemma} Let $U \subseteq \R^n$, $1 \leq p < \infty$ and $A\in\mathbb{N}$. Then
\begin{equation*}
\Big\|\sum_{T \in \T_1 \cup \T_2} \chi_T \Big\|_{\mathrm{BL}^p_{k,2A}(U)} \,\lesssim\, \Big\|\sum_{T \in \T_1} \chi_T\Big\|_{\mathrm{BL}^p_{k,A}(U)} + \Big\|\sum_{T \in \T_2} \chi_T\Big\|_{\mathrm{BL}^p_{k,A}(U)}
\end{equation*} 
whenever $\T_1$ and $\T_2$ are families of $\delta$-tubes.
\end{lemma}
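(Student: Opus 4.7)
The plan is to reduce the inequality to a pointwise (per-ball) statement about the quantities $\mu_{\T}(B_{\;\!\!\delta})$, exploiting the fact that these quantities are defined as $p$-th powers of $L^p$ norms over a single ball, where the classical $L^p$ triangle inequality is available. Since
\[
\Big\|\sum_{T \in \T} \chi_T\Big\|_{\mathrm{BL}^p_{k,A}(U)}^p = \sum_{B_{\;\!\!\delta} \in \mathcal{B}_{\delta}} \frac{|B_{\;\!\!\delta} \cap U|}{|B_{\;\!\!\delta}|} \mu_{\T}(B_{\;\!\!\delta}),
\]
it will suffice to establish the pointwise bound $\mu_{\T_1 \cup \T_2}(B_{\;\!\!\delta}) \lesssim \mu_{\T_1}(B_{\;\!\!\delta}) + \mu_{\T_2}(B_{\;\!\!\delta})$ with $2A$ subspaces on the left and $A$ on the right; summing over $B_{\;\!\!\delta}$ and taking $p$-th roots (and using $(a+b)^{1/p} \le a^{1/p} + b^{1/p}$ for $p \geq 1$) will then yield the claimed estimate.

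For the pointwise bound, fix $B_{\;\!\!\delta} \in \mathcal{B}_{\delta}$ and let $V_1^{(1)}, \dots, V_{\!A}^{(1)}$ and $V_1^{(2)}, \dots, V_{\!A}^{(2)}$ be $(k-1)$-dimensional subspaces achieving the minimum in the definitions of $\mu_{\T_1}(B_{\;\!\!\delta})$ and $\mu_{\T_2}(B_{\;\!\!\delta})$ respectively. Now test the $\min$ defining $\mu_{\T_1 \cup \T_2}(B_{\;\!\!\delta})$ against the concatenated list of $2A$ subspaces. Any cap $\tau$ satisfying $\angle(\tau, V) > \beta$ for all $2A$ of these subspaces is, in particular, $\beta$-separated from every $V_a^{(1)}$ and from every $V_a^{(2)}$ simultaneously. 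Since $(\T_1 \cup \T_2)[\tau] = \T_1[\tau] \cup \T_2[\tau]$, the $L^p(B_{\;\!\!\delta})$ triangle inequality together with $(a+b)^p \le 2^{p-1}(a^p + b^p)$ gives
\[
\Big\|\sum_{T \in (\T_1 \cup \T_2)[\tau]} \chi_T\Big\|_{L^p(B_{\;\!\!\delta})}^{p} \lesssim \Big\|\sum_{T \in \T_1[\tau]} \chi_T\Big\|_{L^p(B_{\;\!\!\delta})}^{p} + \Big\|\sum_{T \in \T_2[\tau]} \chi_T\Big\|_{L^p(B_{\;\!\!\delta})}^{p}.
\]

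Taking the max over such $\tau$ and distributing the max over the sum, each term on the right can be bounded above by the max taken over the (possibly larger) set of caps that only need to avoid the corresponding single family of subspaces. Since the $V_a^{(i)}$ were chosen to realise the minima, this gives exactly $\mu_{\T_1}(B_{\;\!\!\delta}) + \mu_{\T_2}(B_{\;\!\!\delta})$ on the right, up to a constant depending only on $p$. This completes the pointwise bound.

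There is no substantial obstacle here, only the bookkeeping of keeping the $\min$--$\max$ structure straight: the crucial trick is the doubling $A \mapsto 2A$, which lets us supply the definition of the combined broad quantity with the \emph{union} of the two sets of minimisers, thereby forcing any admissible cap in the $\mathrm{BL}^p_{k,2A}$ max to be simultaneously admissible for both of the $\mathrm{BL}^p_{k,A}$ maxes. Without this doubling the inequality would fail, since a single list of $A$ subspaces cannot in general witness both minima at once.
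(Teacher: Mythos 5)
Your proof is correct and follows essentially the same route the paper alludes to: the paper defers to Guth's argument in \cite{Guth2019}, whose proof is precisely the per-ball reduction you give, testing the $\min$ over $2A$ subspaces against the concatenation of the two $A$-tuples of minimisers and then applying the ordinary $L^p$ triangle inequality on each $\delta$-ball. You also correctly identify the doubling $A\mapsto 2A$ as the reason the parameter $A$ is built into the definition, which is exactly the remark the paper makes after stating the lemma.
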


\begin{lemma}[Logarithmic convexity]\label{logarithmic convexity inequality lemma} Let $U \subseteq \R^n$, $1 \leq p, p_0, p_1 < \infty$ and $A\in\mathbb{N}$. Suppose that $\theta\in[0,1]$ satisfies
\begin{equation*}
\frac{1}{p} = \frac{1-\theta}{p_0} + \frac{\theta}{p_1}.
\end{equation*}
Then
\begin{equation*}
\Big\|\sum_{T \in \T} \chi_T\Big\|_{\mathrm{BL}^p_{k,2A}(U)} \,\lesssim\, \Big\|\sum_{T \in \T} \chi_T\Big\|_{\mathrm{BL}^{p_0}_{k,A}(U)}^{1-\theta} \Big\|\sum_{T \in \T} \chi_T\Big\|_{\mathrm{BL}^{p_1}_{k,A}(U)}^{\theta}
\end{equation*} 
whenever $\T$ is a family of $\delta$-tubes.
\end{lemma}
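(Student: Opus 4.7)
The plan is to combine the classical logarithmic convexity of $L^p$-norms with a second application of Hölder's inequality on the sum over $\delta$-balls. The one subtlety is that the subspaces $V_1,\dots,V_A$ realising the infimum in the definition of $\mu_{\T}(B_{\;\!\!\delta})$ depend on the Lebesgue exponent being considered; this is precisely why the left-hand side features $2A$ subspaces while each factor on the right uses only $A$. The fix is to pool the optimal subspaces for $p_0$ and for $p_1$ into a single family.

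I would begin by fixing a ball $B_{\;\!\!\delta} \in \mathcal{B}_{\delta}$ that intersects $U$ and selecting subspaces $V_1^{(0)},\dots,V_A^{(0)}$ which realise the infimum for $\mu_{\T}(B_{\;\!\!\delta})$ computed with exponent $p_0$, together with $V_1^{(1)},\dots,V_A^{(1)}$ playing the analogous role for $p_1$. For any cap $\tau$ making angle $>\beta$ with \emph{all} $2A$ of these subspaces, the classical logarithmic convexity of $L^p$-norms (itself an instance of Hölder) applied on $B_{\;\!\!\delta}$ gives
\begin{equation*}
\Bigl\|\sum_{T\in\T[\tau]}\chi_T\Bigr\|_{L^p(B_{\;\!\!\delta})}^p \,\leq\, \Bigl\|\sum_{T\in\T[\tau]}\chi_T\Bigr\|_{L^{p_0}(B_{\;\!\!\delta})}^{p(1-\theta)} \Bigl\|\sum_{T\in\T[\tau]}\chi_T\Bigr\|_{L^{p_1}(B_{\;\!\!\delta})}^{p\theta}.
\end{equation*}
Taking the maximum over such $\tau$ and bounding the max of a product by the product of the maxima — enlarging each factor by discarding the angle constraints attached to the opposite family of subspaces — produces the pointwise inequality
\begin{equation*}
\mu_{\T}^{(2A,p)}(B_{\;\!\!\delta}) \,\leq\, \mu_{\T}^{(A,p_0)}(B_{\;\!\!\delta})^{p(1-\theta)/p_0}\, \mu_{\T}^{(A,p_1)}(B_{\;\!\!\delta})^{p\theta/p_1},
\end{equation*}
where the superscripts record both the number of subspaces and the exponent used in each version of $\mu_{\T}$.

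To close the argument I would multiply through by the weight $|B_{\;\!\!\delta}\cap U|/|B_{\;\!\!\delta}|$ and sum over $B_{\;\!\!\delta}\in\mathcal{B}_{\delta}$. The assumption $\tfrac{1}{p}=\tfrac{1-\theta}{p_0}+\tfrac{\theta}{p_1}$ is precisely the statement that $p(1-\theta)/p_0$ and $p\theta/p_1$ form a pair of probability weights, so Hölder's inequality applies with the conjugate exponents $p_0/(p(1-\theta))$ and $p_1/(p\theta)$, distributing the weight factor across the two resulting sums. Taking $p$-th roots of the result produces the claimed bound. There is no genuine obstacle: the only book-keeping issue is tracking the two families of subspaces, which is exactly what forces the parameter $2A$ on the left-hand side.
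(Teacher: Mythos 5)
Your argument is correct and is exactly the standard proof (the one in Guth's restriction paper, to which the present article defers). Pooling the two families of near-optimal subspaces to account for the $2A$ parameter, applying the classical $L^p$ interpolation inequality on each $B_{\;\!\!\delta}$ to obtain the pointwise bound on $\mu_{\T}$, and then using Hölder with conjugate exponents $p_0/(p(1-\theta))$ and $p_1/(p\theta)$ across the sum over balls is precisely how the lemma is proved.
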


These estimates are entirely elementary. The proofs are identical to those used to analyse broad norms in the context of the Fourier restriction problem~\cite{Guth2019}. It is remarked that the parameter $A$ appears in the definition of the $k$-broad norm to allow for these weak triangle and logarithmic convexity inequalities.

\subsubsection*{$k$-broad versus $k$-linear estimates}

Although not required for the proof of Theorem~\ref{linear theorem}, it is perhaps instructive to note the relationship between the $k$-broad norms and the multilinear expressions appearing in the work of Bennett--Carbery--Tao~\cite{BCT2006}.

\begin{proposition}\label{k-broad vrs k-linear} Let $\T$ be a collection of $\delta$-tubes in $\R^n$. Then
\begin{equation*}
\Big\|\sum_{T\in \T}\chi_{T}\Big\|_{\mathrm{BL}^p_{k,A}(\R^n)}\,\lesssim\, \Bigg(\sum_{\substack{(\tau_1, \dots, \tau_k) \\ \sim \,\beta^{k-1}\mathrm{\!\!-trans.}}}\Bigl\|\prod_{j=1}^{k}\Big(\sum_{T_{j}\in\T[\tau_{j}]}\chi_{N_{\;\!\!2\delta}T_{j}}\Big)^{1/k}\Bigr\|_{L^{p}(\R^n)}^p\Bigg)^{1/p}
\end{equation*}
where the sum is over all $k$-tuples $(\tau_1, \dots, \tau_k)$ of caps of diameter $\beta$ which are $\sim \beta^{k-1}$-transversal in the sense that $|\bigwedge_{j=1}^{k} \omega_j | \gtrsim \beta^{k-1}$ for all $\omega_j \in \tau_j$.
\end{proposition}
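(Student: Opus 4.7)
The plan is to prove the desired bound ball-by-ball: for each $\delta$-ball $B_{\;\!\!\delta}\in\mathcal{B}_\delta$ I would identify a single $\sim\beta^{k-1}$-transversal $k$-tuple $(\tau_1^*,\dots,\tau_k^*)$ (depending on $B_{\;\!\!\delta}$) that controls $\mu_{\T}(B_{\;\!\!\delta})$ by the corresponding $k$-linear expression, and then sum over $B_{\;\!\!\delta}$ using the finite overlap of $\mathcal{B}_\delta$ together with positivity of the integrand to enlarge the ball-dependent choice to the full sum over transversal $k$-tuples on the right-hand side.

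The key preliminary ingredient is a geometric covering lemma: there exists $A\sim 1$ (in fact $A=1$ suffices) such that any set $\Sigma$ of $\beta$-caps on $S^{n-1}$ admitting no $\sim\beta^{k-1}$-transversal $k$-tuple is contained in $\{\tau:\angle(\tau,V)\leq\beta\}$ for some $V\in\mathrm{Gr}(k-1,n)$. I would establish this by a greedy selection: sequentially choose $\omega_i$ in some cap of $\Sigma$ maximising $|\omega_1\wedge\dots\wedge\omega_i|$. Either the algorithm survives to step $k$ and produces the forbidden transversal $k$-tuple, or it halts at some step $i\leq k-1$ because the maximum drops below a threshold $c\beta^{i-1}$; in the latter case the ratio of consecutive wedges bounds the distance from every direction in $\Sigma$ to $\mathrm{span}(\omega_1,\dots,\omega_{i-1})$ by a constant multiple of $\beta$, and this span embeds into a $(k-1)$-dimensional subspace $V$.

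Armed with the lemma, fix $B_{\;\!\!\delta}$ with $\mu:=\mu_{\T}(B_{\;\!\!\delta})>0$ and set $\Sigma(B_{\;\!\!\delta}):=\{\tau:\|\sum_{T\in\T[\tau]}\chi_T\|_{L^p(B_{\;\!\!\delta})}^p\geq \mu/2\}$. If $\Sigma(B_{\;\!\!\delta})$ admitted no transversal $k$-tuple, the covering subspace $V$ it produces would force $\mu_{\T}(B_{\;\!\!\delta})<\mu/2$, contradicting the definition of $\mu$; hence $\Sigma(B_{\;\!\!\delta})$ contains a $\sim\beta^{k-1}$-transversal $k$-tuple $(\tau_1^*,\dots,\tau_k^*)$, each factor of which satisfies $M_j:=\max_{y\in B_{\;\!\!\delta}}\sum_{T\in\T[\tau_j^*]}\chi_T(y)\geq(\mu/(2|B_{\;\!\!\delta}|))^{1/p}$. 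The pivotal geometric observation is that, at a maximiser $y\in B_{\;\!\!\delta}$, the $M_j$ tubes $T\in\T[\tau_j^*]$ passing through $y$ all satisfy $B_{\;\!\!\delta}\subset N_{\;\!\!2\delta}T$ (since any $x\in B_{\;\!\!\delta}$ lies within $2\delta$ of $y\in T$), so $\sum_{T\in\T[\tau_j^*]}\chi_{N_{\;\!\!2\delta}T}(x)\geq M_j$ uniformly on $B_{\;\!\!\delta}$. Taking products across $j$ and raising to the $p/k$-th power then yields the pointwise bound $\prod_{j=1}^k\bigl(\sum_{T\in\T[\tau_j^*]}\chi_{N_{\;\!\!2\delta}T}(x)\bigr)^{p/k}\geq \mu/(2|B_{\;\!\!\delta}|)$ on $B_{\;\!\!\delta}$; integrating gives
\[
\Bigl\|\prod_{j=1}^k\Bigl(\sum_{T_j\in\T[\tau_j^*]}\chi_{N_{\;\!\!2\delta}T_j}\Bigr)^{1/k}\Bigr\|_{L^p(B_{\;\!\!\delta})}^p\,\geq\,\tfrac{1}{2}\mu_{\T}(B_{\;\!\!\delta}).
\]
Summing over $B_{\;\!\!\delta}\in\mathcal{B}_\delta$, replacing the ball-dependent $k$-tuple by the full sum over $\sim\beta^{k-1}$-transversal $k$-tuples (via positivity, since the chosen tuple is one term in that sum), and assembling the local $L^p$-norms into a global one using the finite overlap of $\mathcal{B}_\delta$, concludes the proof after taking $p$-th roots. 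The principal obstacle is the geometric covering lemma; once it is in hand, the remaining quantitative steps are elementary consequences of positivity and the elementary geometry of $2\delta$-neighbourhoods of tubes.
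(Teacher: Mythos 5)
Your overall strategy is sound: reduce to a single ball $B_{\;\!\!\delta}$, extract a transversal $k$-tuple from the ``large'' caps $\Sigma(B_{\;\!\!\delta})$ via a covering lemma, exploit the fact that a $\delta$-tube through a point of $B_{\;\!\!\delta}$ engulfs $B_{\;\!\!\delta}$ in its $2\delta$-neighbourhood to get a pointwise lower bound, and assemble by positivity and the finite overlap of $\mathcal{B}_{\delta}$. Everything after the covering lemma is correct.

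The gap is in the surviving branch of your greedy. You choose $\omega_i$ in some cap of $\Sigma$ maximising $|\omega_1\wedge\cdots\wedge\omega_i|$ and assert that reaching step $k$ yields a $\sim\beta^{k-1}$-transversal $k$-tuple. But the proposition's transversality condition is quantified \emph{for every} $\omega_j'\in\tau_j$, whereas the greedy only controls the particular vectors it selected. Concretely, if the greedy survives step $j$ with $|\omega_1\wedge\cdots\wedge\omega_j|\geq c\beta^{j-1}$ while $|\omega_1\wedge\cdots\wedge\omega_{j-1}|$ happens to be of order one, then $\mathrm{dist}(\omega_j,\mathrm{span}(\omega_1,\dots,\omega_{j-1}))$ is only guaranteed to be $\gtrsim\beta^{j-1}$, which for $j\geq 3$ is far smaller than the cap diameter $\beta$; the cap $\tau_j$ can then straddle $\mathrm{span}(\omega_1,\dots,\omega_{j-1})$, and choosing $\omega_j'\in\tau_j$ inside that span forces $\bigwedge_j\omega_j'=0$, so the returned tuple fails the required ``for all'' transversality. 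The underlying difficulty is that $\bigwedge_j\omega_j$ changes by $\sim\beta\,|\bigwedge_{i\neq j}\omega_i|$ under a $O(\beta)$-perturbation of the single factor $\omega_j$, and controlling this requires each $\omega_j$ to stay at distance $\gtrsim\beta$ from the span of the \emph{other} $k-1$ chosen directions — a property the wedge-thresholded greedy does not deliver. A corrected argument must run the selection on caps with quantitative margins of size $\sim\beta$ together with a stability estimate for the wedge under perturbations of each factor; that is exactly what the ``simple exercise'' conceals and your sketch leaves unaddressed. (Your claim that $A=1$ suffices is in principle harmless since $\BL{k,A}^p$ is non-increasing in $A$, but it presupposes extracting $V$ with $\angle(\tau,V)\leq\beta$ on the nose rather than $\lesssim\beta$, which is also part of the constant-tracking that needs care.)
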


Thus, any $k$-linear inequality of the type featured in~\cite{BCT2006, Guth2010, BG2011} is stronger than the corresponding $k$-broad estimate (given that $\beta$ is admissible).

The proof of Proposition~\ref{k-broad vrs k-linear} is a simple exercise and is omitted (see~\cite{GHI} for similar results in the (more complicated) context of oscillatory integral operators). 


%




\section{Polynomial partitioning}\label{polynomial partitioning section}

In this section the algebraic and topological ingredients for the proof of Theorem~\ref{main theorem} are reviewed. In particular, the key polynomial partitioning theorem is recalled, which is adapted from~\cite{Guth2016, Guth2019} (see also~\cite{Wang2018}) 
and previously appeared explicitly in~\cite{HR2018}. 

 Given a polynomial $P \colon \R^n \to \R$ consider the collection  $\cell(P)$  
 of connected components of $\R^n \setminus Z(P)$. Each $O' \in \cell(P)$ is referred to as a \emph{cell} cut out by the variety $Z(P)$ and the cells are thought of as partitioning the ambient euclidean space into a finite collection of disjoint regions. 

In order to account for the choice of scale $\delta > 0$ appearing in the definition of the $\delta$-tubes, it will be useful to consider the family of \emph{$\delta$-shrunken cells} defined by
\begin{equation}\label{shrunken cells}
    \O := \big\{ O'\setminus N_{\;\!\!\delta}Z(P) : O' \in \cell(P) \big\}.
\end{equation}
An important consequence of this definition is the following simple observation:
\begin{quote}
    A $\delta$-tube $T$ can enter at most $\deg P + 1$ of the shrunken cells~$O \in \O$.
\end{quote}
Indeed, this is a simple and direct consequence of the fundamental theorem of algebra (or B\'ezout's theorem) applied to the core line of $T$.

\begin{theorem}[Guth~\cite{Guth2019}]\label{partitioning theorem} Fix $0 < \delta < r$, $x_0 \in \R^n$ and suppose $F \in L^1(\R^n)$ is non-negative and supported on $B(x_0,r) \cap N_{\;\!\!2\delta}\bZ$ where $\bZ$ is an $m$-dimensional transverse complete intersection with $\overline{\deg}\,\bZ \leq d$. At least one of the following cases holds:\\

\paragraph{\underline{Cellular case}} There exists a polynomial $P \colon \R^n \to \R$ of degree $O(d)$ with the following properties:
\begin{enumerate}[i)]
    \item $\#\cell(P) \sim d^m$ and each $O \in \cell(P)$ has diameter at most $r/2$.
    \item One may pass to a refinement of $\cell(P)$ such that if $\O$ is defined as in~\eqref{shrunken cells}, then 
    \begin{equation*}
        \int_{O} F \sim d^{-m}\int_{\R^n} F \qquad \textrm{for all $O \in \O$.}
    \end{equation*}
\end{enumerate}  
\paragraph{\underline{Algebraic case}} There exists an $(m-1)$-dimensional  transverse complete intersection $\bY$ of  degree at most $O(d)$ such that
    \begin{equation*}
        \int_{B(x_0,r) \cap N_{\;\!\!2\delta}\bZ} F \,\lesssim\,\log d \int_{B(x_0,r) \cap N_{\;\!\!\delta}\bY} F.
    \end{equation*}
\end{theorem}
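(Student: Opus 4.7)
The plan is to execute an iterative polynomial ham sandwich construction adapted to the $m$-dimensional ambient variety $\bZ$, followed by a dichotomy between cellular and algebraic dominance. The key quantitative input is the dimension bound: polynomials of degree $d$ restricted to $\bZ$ span a vector space of dimension $O(d^m)$, a consequence of $\overline{\deg}\,\bZ \leq d$ and B\'ezout-type considerations. By the Stone--Tukey ham sandwich theorem applied to this vector space, one can simultaneously bisect $\sim d^m$ integrals by a single polynomial of degree $O(d)$. Applying this bisection iteratively $\sim m\log d$ times to the measure $F\,dx$ produces a polynomial $P$ of total degree $O(d)$ whose zero set partitions $\R^n$ into $\sim d^m$ cells whose intersections with $N_{\;\!\!2\delta}\bZ$ carry comparable $F$-mass.

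With $P$ in hand, I would decompose
\begin{equation*}
\int_{\R^n} F \,=\, \int_{B(x_0,r) \cap N_{\;\!\!2\delta}\bZ \cap N_{\;\!\!\delta}Z(P)} F \,+\, \sum_{O \in \O} \int_O F,
\end{equation*}
and branch on which term dominates. If the cellular sum contains at least half of the total mass, discarding those cells whose individual $F$-mass falls below the average places us in the cellular case; the diameter condition is enforced by the auxiliary trick of multiplying $P$ by $O(1)$ linear factors tiling $B(x_0,r)$ at scale $r/2$, which inflates the degree only by an admissible constant. If instead the first term carries at least half the mass, we are in the algebraic case and must exhibit the claimed variety $\bY$.

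The main obstacle is the algebraic case: the naive candidate $\bY = \bZ \cap Z(P)$ need not be an $(m-1)$-dimensional transverse complete intersection, because $P$ may fail to meet the defining equations of $\bZ$ transversely. To remedy this, I would perturb $P$ to $\widetilde{P} = P + \sum_{\alpha} \epsilon_\alpha x^\alpha$ using a generic polynomial of degree at most $\deg P$, so that $\widetilde{P}$ meets $\bZ$ transversely away from a residual bad locus of strictly smaller dimension. A dyadic pigeonhole over the level of $F$-mass captured, together with a bootstrap to handle the residual locus by the same construction, yields a single $(m-1)$-dimensional transverse complete intersection $\bY$ of degree $O(d)$ satisfying the stated inequality; the factor $\log d$ in the conclusion arises precisely from this dyadic pigeonhole. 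The delicate point throughout is tracking degree growth, transversality, and mass capture simultaneously at every level of the construction --- this is the technical heart of the polynomial partitioning machinery developed in \cite{Guth2016, Guth2019} and is where any honest writeup must be most careful.
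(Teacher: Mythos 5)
The paper does not supply a proof of this theorem: it is quoted from Guth~\cite{Guth2019}, with the $\log d$ form of the algebraic case coming from the companion paper~\cite{HR2018}, and the surrounding text offers only a one-sentence indication of the method (Stone--Tukey plus pigeonholing). Against that background your sketch captures the correct ingredients --- the ham sandwich theorem fed through the space of polynomials restricted to $\bZ$ (of dimension $\sim d^m$), iteration to obtain a degree-$O(d)$ partitioning polynomial $P$ with $\sim d^m$ equal-mass cells, the auxiliary linear factors for the diameter bound, and the need for a Sard-type perturbation to make the algebraic variety a transverse complete intersection.

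There is, however, a genuine imprecision where you locate the $\log d$ factor, and without fixing it the algebraic case does not close. The polynomial $P$ produced by the iteration is a product of $L\sim m\log d$ bisecting polynomials $Q_1,\dots,Q_L$ of geometrically increasing degree, so that $Z(P)=\bigcup_j Z(Q_j)$. When the mass near $Z(P)$ dominates, one has $\sum_j \int_{N_{\delta} Z(Q_j)} F \gtrsim \int F$, and one pigeonholes over the $L$ factors to find a single $Q_{j^*}$ with $\int_{N_{\delta} Z(Q_{j^*})} F \gtrsim (\log d)^{-1}\int F$; the variety $\bY$ is then (a transversally perturbed version of) $\bZ\cap Z(Q_{j^*})$. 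The $\log d$ loss is exactly the number of factors. Your phrase ``dyadic pigeonhole over the level of $F$-mass captured'' does not describe this step, and until you isolate a single factor $Q_{j^*}$ the naive candidate $\bZ\cap Z(P)$ is a union of $\sim\log d$ hypersurface sections and cannot be made into one transverse complete intersection. You should also make explicit the quantitative containment $N_{\delta} Z(Q_{j^*})\cap N_{2\delta}\bZ\subseteq N_{O(\delta)}(\bZ\cap Z(Q_{j^*}))$ needed to pass from mass near $Z(Q_{j^*})$ to mass near $\bY$; this is precisely where transversality is used, and the proposed ``bootstrap to handle the residual locus'' is too vague to verify that the degree stays $O(d)$ and the mass is retained.
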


This theorem is based on an earlier discrete partitioning result which played a central role in the resolution of the Erd\H{o}s distance conjecture~\cite{GK2015}. The proof is essentially topological, involving the polynomial ham sandwich theorem of Stone--Tukey~\cite{Stone1942}, which is itself a consequence of the Borsuk--Ulam theorem (see, for instance,~\cite{Matousek}), combined with a pigeonholing argument.

The theorem  is applied to $k$-broad norms by taking\begin{equation*}
   F= \sum_{B_{\;\!\!\delta} \in \cB_{\;\!\!\delta}} \mu_{\T}(B_{\;\!\!\delta}) \frac{1}{|B_{\;\!\!\delta}|} \chi_{B_{\;\!\!\delta}  }.
\end{equation*}
 \begin{itemize} 
 \item If the cellular case holds, then it follows that
 \begin{equation*}
    \Big\|\sum_{T\in \T}\chi_{T}\Big\|^p_{\mathrm{BL}^p_{k,A}(B(x_0,r) \cap N_{\;\!\!2\delta}\bZ)} \,\lesssim\, d^{-m}\Big\|\sum_{T\in \T}\chi_{T}\Big\|_{\BL{k,A}^p(O)} \ \textrm{for all $O \in \O$}
 \end{equation*}
 where $\O$ is the collection of cells produced by Theorem~\ref{partitioning theorem}.
 \item If the algebraic case holds, then it follows that
 \begin{equation*}
    \Big\|\sum_{T\in \T}\chi_{T}\Big\|^p_{\mathrm{BL}^p_{k,A}(B(x_0,r) \cap N_{\;\!\!2\delta}\bZ)} \,\lesssim\, 
     \log d\,\Big\|\sum_{T\in \T}\chi_{T}\Big\|^p_{\mathrm{BL}^p_{k,A}(B(x_0,r) \cap N_{\;\!\!\delta}\bY)}
\end{equation*}
where $\bY$ is the variety produced by Theorem~\ref{partitioning theorem}.
\end{itemize}




\section{Finding polynomial structure}\label{structure lemma section}

In this section, the recursive argument used to study the Fourier restriction problem in~\cite{HR2018} (which, in turn, is adapted from~\cite{Guth2019}) is reformulated so as to apply to the Kakeya problem. As in~\cite{HR2018}, the argument will be presented as two separate algorithms:
\begin{itemize}
    \item \texttt{[alg 1]} effects a dimensional reduction, essentially passing from an $m$-dimensional to an $(m-1)$-dimensional situation. 
    \item  \texttt{[alg 2]} consists of repeated application of the first algorithm to reduce to a minimal dimensional case. 
\end{itemize}

The final outcome is a method of decomposing any given $k$-broad norm into pieces which are either easily controlled or enjoy special algebraic structure. This decomposition applies to \emph{arbitrary} families of $\delta$-tubes. In the following section, we will specialise to the case where the tubes satisfy the polynomial Wolff axiom and use this additional information to prove Theorem~\ref{main theorem}. 

\subsection*{The first algorithm}  Throughout this section let $p \geq 1$ and $0< \varepsilon_{\;\!\!\circ} \ll \varepsilon \ll 1$ be fixed.\\

\paragraph{\underline{\texttt{Input}}} \texttt{[alg 1]} will take as its input:
\begin{itemize}
    \item A choice of small scale $0<\delta\ll 1$ and large scale $r_0\in[\delta^{1-\varepsilon_{\;\!\!\circ}},\delta^{\varepsilon_{\;\!\!\circ}}]$.
    \item A transverse complete intersection $\bZ$ of dimension $m\in\{2,\ldots,n\}$.
    \item A family  $\T$ of $\delta$-tubes which are tangent to $\bZ$ on a ball $B_{\;\!\!r_0}$ of radius $r_0$.
    \item A large integer $A \in \N$. 
\end{itemize}

%
%
%
\paragraph{\underline{\texttt{Output}}} \texttt{[alg 1]} will output a finite sequence of sets $(\sE_j)_{j=0}^J$, which are constructed via a recursive process. Each $\sE_j$ is referred to as an \emph{ensemble} and contains all the relevant information coming from the $j$th step of the algorithm. In particular, the ensemble $\sE_j$ consists of:
\begin{itemize}
    \item A word $\fh_j$ of length $j$ in the alphabet $\{\ta, \tc\}$, referred to as a {\it history}. 
    The  $\ta$ is an abbreviation of ``algebraic'' and~$\tc$ ``cellular''. The words $\fh_j$ are recursively defined by successively adjoining a single letter. Each $\fh_j$ records how the cells $O_j \in \O_j$ were constructed via repeated application of the polynomial partitioning theorem. 
    \item A large scale $r_j\in[\delta^{1-\varepsilon_{\;\!\!\circ}},\delta^{\varepsilon_{\;\!\!\circ}}]$. The $r_j$ will in fact be completely determined by the initial scales and the history $\fh_j$. In particular, 
let $\sigma_{k} \colon [0,1] \to [0,1]$ be given by
    \begin{equation*}
        \sigma_{k}(r) := \left\{ \begin{array}{ll}
        \frac{r}{2} &  \textrm {if the $k$th letter of $\fh_j$ is $\tc$} \\[6pt]
        r^{1+\varepsilon_{\;\!\!\circ}} & \textrm{if the $k$th letter of $\fh_j$ is $\ta$}
        \end{array}\right. 
    \end{equation*}
for each $1 \leq k \leq j$. With these definitions, 
\begin{equation*}
r_j := \sigma_{j} \circ \cdots \circ \sigma_{1}(r_0).
\end{equation*}
Note that each $\sigma_{k}$ is a decreasing function and 
\begin{equation}\label{radius bounds}
r_j \leq \delta^{\varepsilon_{\;\!\!\circ}(1+\varepsilon_{\;\!\!\circ})^{\#_{\sta}(j)}}   \quad \textrm{and} \quad r_j \leq 2^{-\#_{\stc}(j)}\delta^{\varepsilon_{\;\!\!\circ}}  
\end{equation}
  where $\#_{\bta}(j)$ and $\#_{\btc}(j)$ denote the number of occurrences of $\ta$ and $\tc$ in the history $\fh_j$, respectively. 
  \item A family of subsets $\O_j$ of $\R^n$ which will be referred to as \emph{cells}. Each cell $O_j \in \O_j$ is contained in $B_{\;\!\!r_0}$ and will have diameter at most $2r_j$. 
\item An assignment of a subfamily $\T[O_j]$ of $\delta$-tubes to each of the cells $O_j$. 
  \item A large integer $d \in \N$ which depends only on $\Deg \bZ$ and the admissible parameters $n$ and $\varepsilon$.
  \end{itemize}

Moreover, the components of the ensemble are defined so as to ensure that, for certain coefficients
\begin{equation*}
   C_{j}(d):= d^{\#_{\stc}(j)\varepsilon_{\;\!\!\circ}} d^{\#_{\sta}(j)(n+\varepsilon_{\;\!\!\circ})} 
\end{equation*}
and $A_j := 2^{-\#_{\sta}(j)}A \in \N$, the following properties hold:\\

\paragraph{\underline{Property I}} The  function $\sum_{T\in \T} \chi_T$ on $B_{\;\!\!r_0}$ can be compared with functions defined over the $\T[O_j]$:
\begin{equation} \tag*{$(\mathrm{I})_j$}
      \Big\|\sum_{T\in \T} \chi_T\Big\|_{\BL{k,A}^p(B_{\;\!\!r_0})}^p \leq \,\,C_{j}(d)\! \sum_{O_{j} \in \O_{j}} \Big\|\sum_{T\in \T[O_j]} \chi_T\Big\|_{\BL{k,A_j}^p(O_j)}^p .
\end{equation}
\\

\paragraph{\underline{Property II}} The tube families $\T[O_j]$ satisfy
\begin{equation}\tag*{$(\mathrm{II})_j$}
    \sum_{O_{j} \in \O_{j}} \# \T[O_j] \,\leq\, C_{j}(d)d^{\#_{\stc}(j)}  \#\T.
\end{equation}
\paragraph{\underline{Property III}} Furthermore, each individual $\T[O_j]$ satisfies
\begin{equation}\tag*{$(\mathrm{III})_j$}
     \#\T[O_{j}] \,\leq\, C_{j}(d)  d^{-\#_{\stc}(j)(m-1)}  \#\T.
\end{equation}

%
%
%

\subsection*{The initial step}

The initial ensemble $\sE_0$ is defined by taking:
\begin{itemize}
    \item $\fh := \emptyset$ to be the empty word;
    \item $r_0$ to be the large scale;
    \item $\O_0$ the collection consisting of the single ball $O_0 := B_{\;\!\!r_0}$;
    \item $\T[O_0] := \T$.
\end{itemize} 
All the desired properties then vacuously hold. 

At this point it is also convenient to fix some large $d \in \N$, to be determined later, which depends only on  $\Deg\bZ$ and the admissible parameters $n$ and $\varepsilon$. 

With these definitions, it is trivial to verify that Properties I, II and III hold.

%
%
%

\subsection*{The recursive step} Assume the ensembles $\sE_0, \dots, \sE_j$ have been constructed for some $j \in \N_0$ and that they all satisfy the desired properties. \\

%
%
%

\paragraph{\underline{\texttt{Stopping conditions}}} The algorithm has two stopping conditions which are labelled \texttt{[tiny]} and \texttt{[tang]}. 
\begin{itemize}
\item[\texttt{Stop:[tiny]}] The algorithm terminates if $r_j \leq \delta^{1-\varepsilon_{\;\!\!\circ}}$.
\end{itemize}

\begin{itemize}
\item[\texttt{Stop:[tang]}]Let $C_{\textrm{\texttt{tang}}}$ and $C_{\alg}$ be fixed constants, chosen large enough to satisfy the forthcoming requirements of the proof. The algorithm terminates if the inequalities
\begin{equation*}
    \sum_{O_j \in \O_j} \Big\|\sum_{T \in \T[O_j]} \chi_T\Big\|_{\BL{k,A_j}^p(O_j)}^p \,\leq\,  C_{\textrm{\texttt{tang}}} \log d  \sum_{S \in \Sc} \Big\|\sum_{T \in \T[S]} \chi_T\Big\|_{\BL{k,A_j/2}^p(B[S])}^p
\end{equation*}
and
\begin{align*}
   \sum_{S \in \Sc}  \#\T[S]  &\,\leq \,C_{\textrm{\texttt{tang}}}\delta^{-n\varepsilon_{\;\!\!\circ}}\!\!\sum_{O_j \in \O_j}  \#\T[O_j]; \\ 
  \nonumber
  \max_{S \in \Sc}  \#\T[S]  &\,\leq \,C_{\textrm{\texttt{tang}}}\max_{O_j \in \O_j}  \#\T[O_j]
\end{align*}
hold for some choice of:
\end{itemize}
\begin{itemize}
    \item $\Sc$ a collection of transverse complete intersections in $\R^n$ all of equal dimension $m-1$ and degree at most $C_{\alg}d$;
    \item An assignment of a subfamily $\T[S]$ of $\T$ and a $\max\{r_j^{1 +\varepsilon_{\;\!\!\circ}}, \delta^{1 -\varepsilon_{\;\!\!\circ}}\}$-ball $B[S]$ to each $S \in \Sc$ with the property that each $T \in \T[S]$ is tangent to $S$ in $B[S]$ in the sense of Definition~\ref{tangent definition}.
\end{itemize}

The stopping condition \texttt{[tang]} can be roughly interpreted as forcing the algorithm to terminate if one can pass to a lower dimensional situation. Indeed, by the inclusion property \eqref{inclusion}, the broad norm over $B[S]$ could instead be taken over a $2\delta$-neighbourhood of $S$.

If either of the above conditions hold, then the stopping time is defined to be $J := j$. Recalling~\eqref{radius bounds}, the stopping condition \texttt{[tiny]} implies that the algorithm must terminate after finitely many steps and, moreover, 
\begin{equation*}
  \#_{\bta}(J) \,\lesssim\, \varepsilon_{\;\!\!\circ}^{-1} \log(\varepsilon_{\;\!\!\circ}^{-1}) \quad \textrm{and} \quad  \#_{\btc}(J) \,\lesssim\, \log \delta^{-1}.
\end{equation*}
Note that there can be relatively few algebraic steps $\#_{\bta}(j)$ but there can many cellular steps $\#_{\btc}(j)$. The first of the above estimates can also be used to show that $C_{j}(d)\lesssim_{d,\varepsilon_{\;\!\!\circ}}d^{\#_{\stc}(j)\varepsilon_{\;\!\!\circ}}$ always holds. Furthermore, by choosing $A \geq 2^{\varepsilon_{\;\!\!\circ}^{-2}}$, say, one may ensure that the $A_j$ defined above are indeed integers.
\\

%
%
%

\paragraph{\underline{\texttt{Recursive step}}}

Suppose that neither stopping condition \texttt{[tiny]} nor \texttt{[tang]} is met. One proceeds to construct the ensemble $\sE_{j+1}$ as follows. 

Given $O_j \in \O_j$, apply the polynomial partitioning theorem with degree $d$ to 
\begin{equation*}
    \Big\|\sum_{T \in \T[O_j]} \chi_T\Big\|_{\BL{k,A_j}^p(O_j\cap N_{\;\!\!2\delta}\bZ)}^p = \Big\|\sum_{T \in \T[O_j]} \chi_T\Big\|_{\BL{k,A_j}^p(O_j)}^p.
\end{equation*}
 For each $O_j \in \O_j$ either the cellular or the algebraic case holds, as defined in Theorem~\ref{partitioning theorem}. Let $\O_{j,\cell}$ denote the subcollection of $\O_j$ consisting of all cells for which the cellular case holds and $\O_{j,\alg} := \O_j \setminus \O_{j,\cell}$. Thus, by $(\mathrm{I})_j$, one may bound $\|\sum_{T \in \T} \chi_T\|_{\BL{k,A}^p(B_{\;\!\!r_0})}^p$ by
\begin{equation*}
    C_{j}(d)  \Big[ \sum_{O_j \in \O_{j,\cell}} \Big\|\sum_{T \in \T[O_j]} \chi_T\Big\|_{\BL{k,A_j}^p(O_j)}^p + \sum_{O_j \in \O_{j,\alg}} \Big\|\sum_{T \in \T[O_j]} \chi_T\Big\|_{\BL{k,A_j}^p(O_j)}^p \Big];
\end{equation*}
the analysis is splits into two cases depending on which term in the above sum dominates.




\subsection*{$\blacktriangleright$ Cellular-dominant case} Suppose that the inequality
\begin{equation*}
     \sum_{O_j \in \O_{j,\alg}} \Big\|\sum_{T \in \T[O_j]} \chi_T\Big\|_{\BL{k,A_j}^p(O_j)}^p \leq \sum_{O_j \in \O_{j,\cell}} \Big\|\sum_{T \in \T[O_j]} \chi_T\Big\|_{\BL{k,A_j}^p(O_j)}^p 
\end{equation*}
holds so that 
\begin{equation}\label{cellular dominant case}
   \Big\|\sum_{T \in \T} \chi_T\Big\|_{\BL{k,A}^p(B_{\;\!\!r_0})}^p\leq \,2C_{j}(d)\!\! \sum_{O_j \in \O_{j,\cell}} \Big\|\sum_{T \in \T[O_j]} \chi_T\Big\|_{\BL{k,A_j}^p(O_j)}^p.
\end{equation}

\paragraph{\underline{Definition of $\sE_{j+1}$}} Define $\fh_{j+1}$ by adjoining the letter $\tc$ to the word $\fh_j$. Thus, it follows from the definitions that 
\begin{equation}\label{cellular word}
    r_{j+1} =  \tfrac{1}{2}r_j, \quad \#_{\btc}(j+1) = \#_{\btc}(j)+1 \quad \textrm{and} \quad \#_{\bta}(j+1) = \#_{\bta}(j).
\end{equation}

The next generation of cells $\O_{j+1}$ arise from the cellular decomposition guaranteed by Theorem~\ref{partitioning theorem}. Fix $O_j \in \O_{j, \cell}$ so that there exists some polynomial $P \colon \R^n \to \R$ of degree $O(d)$ with the following properties:
\begin{enumerate}[i)]
    \item $\#\cell(P) \sim d^m$ and each $O \in \cell(P)$ has diameter at most $2r_{j+1}$. 
    \item One may pass to a refinement of $\cell(P)$ such that if 
    \begin{equation*}
        \O_{j+1}(O_j) := \big\{ O\setminus N_{\;\!\!\delta}Z(P) : O \in \cell(P)\}
    \end{equation*}
    denotes the corresponding collection of $\delta$-shrunken cells, then 
    \begin{equation*}
    \Big\|\sum_{T \in \T[O_j]} \chi_T\Big\|_{\BL{k,A_j}^p(O_j)}^p \,\lesssim\, d^{m}\Big\|\sum_{T \in \T[O_j]} \chi_T\Big\|_{\BL{k,A_j}^p(O_{j+1})}^p
\end{equation*}
for all $O_{j+1} \in \O_{j+1}(O_j)$.
    \end{enumerate}
 Given $O_{j+1} \in \O_{j+1}(O_j)$, define
 \begin{equation*}
 \T[O_{j+1}] := \big\{ T \in \T[O_j] : T \cap O_{j+1} \neq \emptyset \big\}.
\end{equation*}
 Recall that, by the fundamental theorem of algebra (or B\'ezout's theorem), any $\delta$-tube~$T$ can enter at most $O(d)$ cells $O_{j+1} \in \O_{j+1}(O_j)$ and, consequently, 
\begin{equation}\label{cellular 1}
    \sum_{O_{j+1} \in \O_{j+1}(O_j)} \#\T[O_{j+1}] \,\lesssim\, d \cdot \#\T[O_j].
\end{equation}
By the pigeonhole principle, one may pass to a refinement of $\O_{j+1}(O_j)$ such that
 \begin{equation}\label{cellular 2}
    \#\T[O_{j+1}] \,\lesssim\, d^{-(m-1)} \#\T[O_j] \qquad \textrm{for all $O_{j+1} \in \O_{j+1}(O_j)$.}
\end{equation}
 Finally, define 
 \begin{equation*}
 \O_{j+1} := \bigcup_{O_j \in \O_{j, \cell}} \O_{j+1}(O_j).
\end{equation*}
This completes the construction of $\sE_{j+1}$ and 
it remains to check that the new ensemble satisfies the desired properties. In view of this, it is useful to note that
\begin{equation}\label{cellular coefficients}
C_{j}(d) = d^{-\varepsilon_{\;\!\!\circ}} C_{j+1}(d) \quad \textrm{and}\quad A_j = A_{j+1},
\end{equation}
which follows immediately from~\eqref{cellular word} and the definition of the $C_{j}(d)$ and $A_j$. \\

%
%
%
\paragraph{\underline{Property I}} Fix $O_j \in \O_{j,\cell}$ and observe that $\#\O_{j+1}(O_j) \sim d^m$ and
\begin{equation*}
    \Big\|\sum_{T \in \T[O_j]} \chi_T\Big\|_{\BL{k,A_j}^p(O_j)}^p  \,\lesssim\, d^{m}\Big\|\sum_{T \in \T[O_{j+1}]} \chi_T\Big\|_{\BL{k,A_j}^p(O_{j+1})}^p
\end{equation*}
for all $O_{j+1} \in \O_{j+1}(O_j)$. Averaging,
\begin{equation*}
    \Big\|\sum_{T \in \T[O_j]} \chi_T\Big\|_{\BL{k,A_j}^p(O_j)}^p \,\lesssim \sum_{O_{j+1} \in \O_{j+1}(O_j)}  \Big\|\sum_{T \in \T[O_{j+1}]} \chi_T\Big\|_{\BL{k,A_j}^p(O_{j+1})}^p
\end{equation*}
and, recalling~\eqref{cellular dominant case} and~\eqref{cellular coefficients}, one deduces that
\begin{equation*}
    \Big\|\sum_{T \in \T} \chi_T\Big\|_{\BL{k,A}^p(B_{\;\!\!r_0})}^p \leq \,\,C d^{-\varepsilon_{\;\!\!\circ}} C_{j+1}(d)\!\! \sum_{O_{j+1} \in \O_{j+1}}  \Big\|\sum_{T \in \T[O_{j+1}]} \chi_T\Big\|_{\BL{k,A_{j+1}}^p(O_{j+1})}^p.
\end{equation*}
Provided $d$ is chosen large enough so as to ensure that the additional $d^{-\varepsilon_{\;\!\!\circ}}$ factor absorbs the unwanted constant $C$, one deduces $(\mathrm{I})_{j+1}$. This should be compared
with the approach of Solymosi and Tao to polynomial partitioning \cite{TS2012}.
\\

%
%
%
\paragraph{\underline{Property II}} By the construction, 
\begin{align*}
    \sum_{O_{j+1} \in \O_{j+1}} \#\T[O_{j+1}] &\,=\, \sum_{O_j \in \O_j}  \sum_{O_{j+1} \in \O_{j+1}(O_j)} \#\T[O_{j+1}]  \\
    &\,\lesssim\, d \sum_{O_j \in \O_j} \#\T[O_j], 
\end{align*}
where the inequality follows from a term-wise application of~\eqref{cellular 1}. Thus, $(\mathrm{II})_j$,~\eqref{cellular word} and~\eqref{cellular coefficients} imply that
\begin{equation*}
    \sum_{O_{j+1} \in \O_{j+1}} \#\T[O_{j+1}] \,\lesssim\, d^{-\varepsilon_{\;\!\!\circ}} C_{j+1}(d)  d^{\#_{\stc}(j+1)} \#\T.
\end{equation*}
Provided $d$ is chosen sufficiently large, one deduces $(\mathrm{II})_{j+1}$.\\

%
%
%
\paragraph{\underline{Property III}} Fix $O_{j+1} \in \O_{j+1}(O_j)$ and recall from~\eqref{cellular 2} that
\begin{equation*}
    \#\T[O_{j+1}] \,\lesssim\, d^{-(m-1)}\#\T[O_{j}].
\end{equation*}
Thus, $(\mathrm{III})_j$,~\eqref{cellular word} and~\eqref{cellular coefficients} imply that
\begin{equation*}
    \#\T[O_{j+1}]\,\lesssim\, d^{-\varepsilon_{\;\!\!\circ}} C_{j+1}(d) d^{-\#_{\stc}(j+1)(m-1)} \#\T[O_{j}].
\end{equation*}
Provided $d$ is chosen sufficiently large as before, one deduces $(\mathrm{III})_{j+1}$.




\subsection*{$\blacktriangleright$ Algebraic-dominant case} Suppose the hypothesis of the cellular-dominant case fails so that
\begin{equation}\label{algebraic dominant case}
   \Big\|\sum_{T \in \T} \chi_T\Big\|_{\BL{k,A}^p(B_{\;\!\!r_0})}^p \leq \,\,2C_{j}(d)\!\!  \sum_{O_j \in \O_{j,\alg}} \Big\|\sum_{T \in \T[O_j]} \chi_T\Big\|_{\BL{k,A_j}^p(O_j)}^p.
\end{equation} 
Each cell in $\O_{j,\alg}$ satisfies the condition of the algebraic case of Theorem~\ref{partitioning theorem}; this information is used to construct the $(j+1)$-generation ensemble. \\

\paragraph{\underline{Definition of $\sE_{j+1}$}} Define $\fh_{j+1}$ by adjoining the letter $\ta$ to the word $\fh_j$. Thus, it follows from the definitions that 
\begin{equation}\label{algebraic word}
    r_{j+1} =  r_j^{1+\varepsilon_{\;\!\!\circ}}, \quad \#_{\btc}(j+1) = \#_{\btc}(j) \quad \textrm{and} \quad \#_{\bta}(j+1) = \#_{\bta}(j)+1.
\end{equation} 
 The next generation of cells is constructed from the varieties  which arise from the algebraic case in Theorem~\ref{partitioning theorem}. Fix $O_j \in \O_{j,\alg}$ so that there exists a transverse complete intersection $\bY_{\!j}$ of dimension $m-1$ and $\Deg \bY_{\!j} \leq C_{\mathrm{alg}} d$ such that
\begin{equation*}
    \Big\|\sum_{T \in \T[O_j]} \chi_T\Big\|_{\BL{k,A_j}^p(O_j)}^p \,\lesssim\,  \log d\,\Big\|\sum_{T \in \T[O_j]} \chi_T\Big\|_{\BL{k,A_{j}}^p(O_j \cap N_{\;\!\!\delta}\bY_{\!j})}^p.
\end{equation*}
Let $\cB(O_j)$ be a  cover of $O_j \cap N_{\;\!\!\delta}\bY_{\!j}$ consisting of finitely-overlapping balls of radius $\max\{r_{j+1},\delta^{1 -\varepsilon_{\;\!\!\circ}}\}$. For each $B \in \cB(O_j)$ let $\T_{\!B}$ denote the family of $T \in \T[O_j]$ for which $T \cap B \cap N_{\;\!\!\delta}\bY_{\!j}  \neq \emptyset$. This set is partitioned into the subsets
\begin{equation*}
    \T_{\!B, \tang} := \big\{ T \in \T_{\!B} : \textrm{$T$ is tangent to $\bY_{\!j}$ on $B$} \big\}, \quad \T_{\!B, \trans} := \T_{\!B} \setminus \T_{\!B, \tang};
\end{equation*}
here the notion of tangency is that given in Definition~\ref{tangent definition}.

 By hypothesis, \texttt{[tang]} fails and, consequently, one may deduce that
\begin{equation}\label{algebraic 1}
    \sum_{O_j\in \O_{j,\mathrm{alg}}}\Big\|\sum_{T \in \T[O_j]} \chi_T\Big\|_{\BL{k,A_j}^p(O_j)}^p \lesssim\,\, \log d\!\! \sum_{\substack{O_j \in \O_{j,\mathrm{alg}}\\B \in \cB(O_j)}} \Big\|\sum_{T \in \T_{\!B, \trans}} \chi_T\Big\|_{\BL{k,A_{j+1}}^p(B_j)}^p
\end{equation}
where, for notational convenience, $B_j := B \cap N_{\;\!\!\delta}\bY_{\!j}$. Indeed, provided $C_{\textrm{\texttt{tang}}} > 0$ is sufficiently large,
\begin{align}\label{tangent bounds} \nonumber
   \sum_{O_j \in \O_{j,\alg}} \sum_{B \in \cB(O_j)}  \#\T_{\!B, \tang}  &\,\leq\,\, C_{\textrm{\texttt{tang}}}\delta^{-n\varepsilon_{\;\!\!\circ}}\!\!\sum_{O_j \in \O_j}  \#\T[O_j]; \\ 
  \max_{O_j \in \O_{j,\alg}} \max_{B \in \cB(O_j)}  \#\T_{\!B, \tang}  &\,\leq\,\, \max_{O_j \in \O_j}  \#\T[O_j].
\end{align}
Consequently, the failure of the stopping condition \texttt{[tang]} forces
\begin{equation*}
    \log d\!\!\sum_{O_j \in \O_j} \sum_{B \in \cB(O_j)} \Big\|\sum_{T \in \T_{\!B, \tang}} \!\!\!\!\chi_T\Big\|_{\BL{k,A_{j+1}}^p(B)}^p < \frac{1}{C_{\mathrm{tang}}}  \sum_{O_j \in \O_j} \Big\|\sum_{T \in \T[O_j]} \!\!\!\chi_T\Big\|_{\BL{k,A_j}^p(O_j)}^p 
\end{equation*}
(since the estimates in~\eqref{tangent bounds} show all other conditions for \texttt{[tang]} are met for $\mathcal{S}$, $\T[S]$ and $B[S]$ appropriately defined). On the other hand, by the triangle inequality for broad norms (Lemma~\ref{triangle inequality lemma}), using the fact that $A_{j+1} = A_j/2$, the left-hand side of~\eqref{algebraic 1} is dominated by
\begin{equation*}
 \log d\!\! \sum_{O_j \in \O_{j,\mathrm{alg}}} \sum_{B \in \cB(O_j)} \Big[\big\|\sum_{T \in \T_{\!B, \tang}}\!\!\chi_T\big\|_{\BL{k,A_{j+1}}^p(B_j)} + \big\|\sum_{T \in \T_{\!B, \trans}}\!\!\chi_T\big\|_{\BL{k,A_{j+1}}^p(B_j)}^p\Big].
\end{equation*}
For a suitable choice of constant $C_{\mathrm{tang}}$, combining the information in the two previous displays yields~\eqref{algebraic 1}.

For $O_j \in \O_{j,\alg}$ define 
\begin{equation*}
    \O_{j+1}(O_j) := \big\{ B \cap N_{\;\!\!\delta}\bY_{\!j} : B \in \cB(O_j) \big\}
\end{equation*}
and let $\T[O_{j+1}] := \T_{\!B, \trans}$ for $O_{j+1} = B \cap N_{\;\!\!\delta}\bY_{\!j} \in \O_{j+1}(O_j)$. The collection of cells $\O_{j+1}$ is then given by
\begin{equation*}
    \O_{j+1} := \bigcup_{O_j \in \O_{j, \alg}} \O_{j+1}(O_j). 
\end{equation*}
It remains to verify that the ensemble $\sE_{j+1}$ satisfies the desired properties. In view of this, it is useful to note that
\begin{equation}\label{algebriac coefficients}
C_{j}(d) = d^{-(n+\varepsilon_{\;\!\!\circ})}C_{j+1}(d),
\end{equation}
which follows directly from the definition of $C_{j}(d)$ and \eqref{algebraic word}.\\

%
%
%
\paragraph{\underline{Property I}} By combining~\eqref{algebraic 1} together with the various definitions one obtains 
\begin{equation*}
  \sum_{O_{j} \in \O_{j,\mathrm{alg}}} \Big\|\sum_{T \in \T[O_j]} \chi_T\Big\|_{\BL{k,A_j}^p(O_{j})}^p \,\lesssim\,\, \log d\!\!\! \sum_{O_{j+1} \in \O_{j+1}} \Big\|\sum_{T \in \T[O_{j+1}]} \chi_T\Big\|_{\BL{k,A_{j+1}}^p(O_{j+1})}^p.
\end{equation*}
Recalling~\eqref{algebraic dominant case} and~\eqref{algebriac coefficients}, if $c(d) := Cd^{-(n+\varepsilon_{\;\!\!\circ})}\log d$ for an appropriate choice of admissible constant $C$, then
\begin{equation*}
\Big\|\sum_{T \in \T} \chi_T\Big\|_{\BL{k,A}^p(B_{\;\!\!r_0})}^p \,\leq \,\,c(d)C_{j+1}(d)\!\!  \sum_{O_{j+1} \in \O_{j+1}}\Big\|\sum_{T \in \T[O_{j+1}]} \chi_T\Big\|_{\BL{k,A_{j+1}}^p(O_{j+1})}^p.
\end{equation*}
 Provided $d$ is sufficiently large, $c(d) \leq 1$ and one thereby deduces $(\mathrm{I})_{j+1}$. 
\\
\paragraph{\underline{Property II}} Fix $O_j \in \O_{j,\mathrm{alg}}$ and note that
\begin{equation}\label{algbraic case property II}
    \sum_{O_{j+1} \in \O_{j+1}(O_j)} \# \T[O_{j+1}] = \sum_{B \in \cB(O_j)}  \#\T_{\!B, \trans} 
\end{equation}
by the definition of $\T[O_{j+1}]$. To estimate the latter sum one may invoke the following algebraic-geometric result of Guth, which appears in Lemma 5.7 of~\cite{Guth2019}. 

\begin{lemma}[\cite{Guth2019}]\label{transversal intersection lemma}
Suppose $T$ is an infinite cylinder in $\R^n$ of radius $\delta$ and central axis $\ell$ and $\bY$ is a transverse complete intersection. For $\alpha > 0$ let
\begin{equation*}
    \bY_{\!> \alpha} := \big\{\, y \in \bY \,:\, \angle(T_y\bY, \ell) > \alpha \,\big\}.
\end{equation*}
The set $\bY_{\!> \alpha} \cap T$ is contained in a union of $O\big((\Deg \bY)^n\big)$ balls of radius $\delta\alpha^{-1}$.
\end{lemma}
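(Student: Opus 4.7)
The plan is to realise $\bY_{>\alpha}\cap T$ as a semialgebraic set of complexity bounded in terms of $D:=\Deg\bY$, control its number of connected components by the Oleinik--Petrovsky--Milnor--Thom theorem, and bound the diameter of each component by $O(\delta/\alpha)$ using the angular hypothesis together with a B\'ezout fibre bound.

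After rotating coordinates so that $\ell$ is the $x_n$-axis, the tube becomes $T=\{|x'|^2\le\delta^2\}$. Writing $\bY=Z(P_1,\dots,P_{n-m})$ with $\deg P_i\le D$ and letting $J(y)$ denote the Jacobian of the $P_i$, one has $\sin^2(\angle(T_y\bY,\ell))=(Je_n)^{T}(JJ^{T})^{-1}(Je_n)$. Clearing the positive factor $\det(JJ^{T})$, which is nonvanishing on $\bY$ by the transverse-complete-intersection hypothesis, converts the condition $\angle(T_y\bY,\ell)>\alpha$ into a polynomial inequality of degree $O_n(D)$ in $y$. Thus $\bY_{>\alpha}\cap T$ is cut out by $O(1)$ polynomial equations and inequalities of degree $O_n(D)$ in $n$ variables; Oleinik--Petrovsky--Milnor--Thom (equivalently, Warren's theorem) then bounds the number of its connected components by $O_n(D^n)$.

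It remains to show each connected component $C$ is contained in a single ball of radius $O(\delta/\alpha)$. The key pointwise observation is that any unit tangent $v\in T_y\bY$ at $y\in C$ satisfies $\angle(v,\ell)\ge\angle(T_y\bY,\ell)>\alpha$, so $|v_n|\le\cos\alpha$ and $|v'|\ge\sin\alpha$. Consequently the projection $\pi:\R^n\to\ell^\perp$ restricted to $C$ is an immersion whose smallest singular value is at least $\sin\alpha$; intrinsic distances on $C$ are therefore contracted by a factor at most $1/\sin\alpha$ when mapped into the disk $\pi(C)\subset B_\delta^{n-1}$ of diameter $2\delta$. If $\pi|_C$ were injective this would already give $\mathrm{diam}(C)\lesssim\delta/\alpha$; in general I would invoke B\'ezout---each vertical fibre $\bY\cap(\{x'\}+\ell)$ consists of at most $D$ points, since substituting $x_n=t$ into any $P_i$ yields a degree-$D$ polynomial in $t$---to bound the multiplicity of $\pi|_C$ by $D$, and then combine this multiplicity bound with the connectedness of $C$ to control the $x_n$-extent of $C$ by $O_n(\delta/\alpha)$.

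The main obstacle is this last diameter estimate. The semialgebraic complexity computation and the component count are textbook applications of standard tools, but upgrading the local bi-Lipschitz behaviour of $\pi|_C$ to a genuine global diameter bound relies essentially on the algebraicity of $\bY$, as illustrated by the example of a non-algebraic helix of arbitrarily small pitch, which satisfies $\angle(T\bY,\ell)>\alpha$ everywhere yet has unbounded extent inside $T$. Converting the multisheeted immersion into a Euclidean diameter bound via the B\'ezout fibre bound is thus the crux of the lemma.
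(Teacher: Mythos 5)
The paper does not prove this lemma; it is Lemma 5.7 of Guth~\cite{Guth2019}, invoked here with only a citation, so there is no in-paper argument to compare against.

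Assessed on its own merits, your proposal has a genuine gap at precisely the point you flag as ``the crux.'' The semialgebraic reformulation, the formula $\sin^2\angle(T_y\bY,\ell)=(Je_n)^T(JJ^T)^{-1}(Je_n)$, and the Oleinik--Petrovsky--Milnor--Thom component count are all sound. However the reduction to a per-component diameter bound does not work: it is not true, in dimensions $n\geq 3$, that each connected component $C$ of $\bY_{>\alpha}\cap T$ lies in a single ball of radius $O(\delta\alpha^{-1})$. The angle hypothesis forces the projection $\pi$ onto $\ell^\perp$ to expand intrinsic lengths on $C$ by a factor at least $\sin\alpha$, which controls the \emph{arclength} of the projected image $\pi(C)\subset B_\delta^{n-1}$; but $\pi(C)$ can wind around inside the disc up to $O(\Deg\bY)$ times, so this does not control the $x_n$-extent of $C$. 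Concretely, an algebraic spiral of degree $D$ in $\R^3$ with essentially constant winding rate, lying inside $T$ with its tangent at angle greater than $\alpha$ to $\ell$ throughout, has $x_n$-extent of order $D\,\delta\alpha^{-1}$ on a single connected arc. The B\'ezout fibre bound you invoke only controls the multiplicity of $\pi|_C$ by $O(\Deg\bY)$, and a multiplicity bound of $D$ combined with the local dilation property gives a diameter bound of order $D\,\delta\alpha^{-1}$, not $\delta\alpha^{-1}$: connectedness alone cannot do better, exactly because of the spiralling you anticipate. To recover the stated count of $O((\Deg\bY)^n)$ balls of radius $\delta\alpha^{-1}$ one must subdivide the components further (for instance along the critical locus of $x_n$ or by a polynomial slicing in $x_n$) and then account globally for the total number of pieces so produced; that accounting is the real content of the lemma, and your sketch does not supply it.
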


Since $T \cap B \cap N_{\;\!\!\delta}\bY \neq \emptyset$ by the definition of $\T_{\!B}$, a tube $T \in \T_{\!B}$ belongs to $\T_{\!B,\trans}$ if and only if the angle condition ii) from Definition~\ref{tangent definition} fails to be satisfied. Thus, given any $T \in \bigcup_{B \in \cB} \T_{\!B, \trans}$, it follows from the definitions that 
\begin{equation*}
    \angle(\mathrm{dir}(T), T_y \bY) \,\gtrsim\, \frac{\delta}{r_{j+1}}
\end{equation*}
for some $y \in \bY\cap 2B$ with $|y - x| \lesssim \delta$ for some $x\in T$. This implies that 
\begin{equation*}
    N_{\;\!\!C\delta}T \cap 2B \cap \bY_{\!> \alpha_{j+1}} \neq \emptyset
\end{equation*}
where $\alpha_{j+1} \sim \delta/r_{j+1}$. Consequently, by Lemma~\ref{transversal intersection lemma}, any $T \in \bigcup_{B \in \cB(O_j)} \T_{\!B, \trans}$ lies in at most $O(d^n)$ of the sets~$\T_{\!B}$ and so
\begin{equation*}
    \sum_{B \in \cB(O_j)}  \#\T_{\!B, \trans} \,\lesssim\, d^n \#\T[O_j].
\end{equation*}
Combining this inequality with~\eqref{algbraic case property II} and summing over all $O_j \in \O_{j,\mathrm{alg}}$,
\begin{equation*}
    \sum_{O_{j+1} \in \O_{j+1}}  \# \T[O_{j+1}] \,\lesssim\, d^n \sum_{O_{j} \in \O_{j}}\#\T[O_j].
\end{equation*}
Applying $(\mathrm{II})_j$,~\eqref{algebraic word} and~\eqref{algebriac coefficients}, one concludes that
\begin{equation*}
    \sum_{O_{j+1} \in \O_{j+1}} \#\T [O_{j+1}] \,\lesssim\, d^{-\varepsilon_{\;\!\!\circ}}C_{j+1}(d)d^{\#_{\stc}(j+1)} \#\T.
\end{equation*}
Provided $d$ is chosen to be sufficiently large to absorb the implicit constant, one deduces $(\mathrm{II})_{j+1}$. 
\\

\paragraph{\underline{Property III}} Fix $O_j \in \O_{j,\mathrm{alg}}$ and $O_{j+1} \in \O_{j+1}(O_j)$. By definition, $\T[O_{j+1}] \subseteq \T[O_{j}]$ and so \begin{equation*}
   \#\T[O_{j+1}] \,\leq\, \#\T[O_{j}] \,\leq\, C_{j+1}(d) d^{-\#_{\stc}(j+1)(m-1)}\#\T,
\end{equation*}
by $(\mathrm{III})_j$ and~\eqref{algebraic word}.

\subsection*{The second algorithm} The algorithm \texttt{[alg 1]} is now applied repeatedly in order to arrive at a final decomposition of the $k$-broad norm. This process forms part of a second algorithm, referred to as \texttt{[alg 2]}.

Throughout this section  let $p_{\ell}$, with $k \leq \ell \leq n$, denote some choice of Lebesgue exponents satisfying $p_k \geq p_{k+1} \geq \dots \geq p_n =: p \geq 1.$ The numbers $0 \leq  \Theta_{\ell} \leq 1$ are then defined in terms of the $p_{\ell}$ by
\begin{equation*}
   \Theta_\ell :=\Big(1-\frac{1}{p_\ell}\Big)^{-1}\Big(1-\frac{1}{p}\Big)
\end{equation*}
so that $\Theta_n=1$. Also fix $0 < \varepsilon_{\;\!\!\circ}  \ll \varepsilon \ll 1$ as in the previous section. 

There are two stages to \texttt{[alg 2]}, which can roughly be described as follows:
\begin{itemize}
    \item \textbf{The recursive stage}: $\sum_{T \in \T}\chi_T$ is repeatedly decomposed into pieces with favourable tangency properties with respect to varieties of progressively lower dimension.  
    \item \textbf{The final stage}: $\sum_{T \in \T}\chi_T$ is further decomposed into very small scale pieces.
\end{itemize}
To begin, the recursive stage of \texttt{[alg 2]} is described. \\

\paragraph{\underline{\texttt{Input}}} \texttt{[alg 2]} will take as its input:
\begin{itemize}
    \item A choice of small scale $0 < \delta  \ll 1$.
    \item A large integer $A \in \N$. 
    \item A family of $\delta$-tubes $\T$ which are non-degenerate in the sense that
\begin{equation}\label{non degeneracy hypothesis}
\Big\|\sum_{T \in \T} \chi_T\Big\|_{\BL{k,A}^{p}(\R^n)} \neq 0.
\end{equation}
\end{itemize}
Note that the process applies to essentially arbitrary families of $\delta$-tubes (in particular, the polynomial Wolff axiom hypothesis does not appear at this stage).\\

\paragraph{\underline{\texttt{Output}}} The $(n+1-\ell)$th step of the recursion will produce:
\begin{itemize}
    \item An $(n+1-\ell)$-tuple of:
    \begin{itemize}
        \item scales $\vec{\delta}_{\ell} = (\delta_n, \dots, \delta_{\ell})$ satisfying $\delta^{\varepsilon_{\;\!\!\circ}} = \delta_n >  \dots > \delta_{\ell} \ge \delta^{1 - \varepsilon_{\;\!\!\circ}}$;
        \item large and (in general) non-admissible parameters $\vec{D}_{\ell} = (D_n, \dots, D_{\ell})$;
        \item integers $\vec{A} = (A_n, \dots, A_{\ell})$ satisfying $A = A_n > A_{n-1} > \dots > A_{\ell}$.
    \end{itemize}
    Each of these $(n+1-\ell)$-tuples is formed by adjoining a component to the corresponding $(n-\ell)$-tuple from the previous stage.
    \item A family $\vec{\Sc}_{\ell}$ of $(n+1-\ell)$-tuples of transverse complete intersections $\vec{S}_{\ell} = (S_n, \dots, S_{\ell})$ satisfying $\dim S_i = i$ and $\Deg S_i = O(1)$ for $\ell \leq i \leq n$. 
    \item An assignment of a $\delta_{\ell}$-ball $B[\vec{S}_{\ell}]$ and a subfamily $\T[\vec{S}_{\ell}]$ of $\delta$-tubes to each $\vec{S}_{\ell} \in \vec{\Sc}_{\ell}$ with the property that the tubes $T \in \T[\vec{S}_{\ell}]$ are tangent to $S_{\ell}$ in $B[\vec{S}_{\ell}]$ (here $S_{\ell}$ is the final component of $\vec{S}_{\ell}$).
\end{itemize}
This data is chosen so that the following properties hold:

\begin{notation} Throughout this section a large number of harmless $\delta^{-\varepsilon_{\;\!\!\circ}}$-factors appear in the inequalities. For notational convenience, given $A, B \geq 0$ let $A \lessapprox B$ or $B \gtrapprox A$ denote $A \lesssim \delta^{-c\varepsilon_{\;\!\!\circ}} B$ for some $c>0$ depending only on $n$ and $p$.
\end{notation}

\paragraph{\underline{Property 1}} The inequality
\begin{equation}\label{l step}
      \Big\|\sum_{T \in \T} \chi_T\Big\|_{\BL{k,A}^p(\R^n)} \lessapprox C(\vec{D}_{\ell}; \vec{\delta}_{\ell}) [\delta^n \#\T]^{1-\Theta_{\ell}} \Big( \sum_{\vec{S}_{\ell} \in \vec{\Sc}_{\ell}} \Big\|\sum_{T \in \T[\vec{S}_{\ell}]} \!\!\!\chi_T\Big\|_{\BL{k,A_{\ell}}^{p_{\ell}}(B[\vec{S}_{\ell}])}^{p_{\ell}}\Big)^{\frac{\Theta_{\ell}}{p_{\ell}}}
\end{equation}
holds for 
\begin{equation*}
    C(\vec{D}_{\ell}; \vec{\delta}_{\ell}) := \prod_{i=\ell}^{n-1} \Big(\frac{\delta_i}{\delta}\Big)^{\Theta_{i+1}-\Theta_{i}}D_i^{(1 + \varepsilon_{\;\!\!\circ})(\Theta_{i+1} - \Theta_{\ell})}.
\end{equation*}
\paragraph{\underline{Property 2}}  For $\ell \leq n-1$, the inequality
\begin{equation*}
    \sum_{\vec{S}_{\ell} \in \vec{\Sc}_{\ell}} \# \T[\vec{S}_{\ell}] \,\lessapprox\,\,D_{\ell}^{1 + \varepsilon_{\;\!\!\circ}} \!\!\!\!\sum_{\vec{S}_{\ell+1} \in \vec{\Sc}_{\ell+1}} \# \T[\vec{S}_{\ell+1}] 
\end{equation*}
holds.\\

\paragraph{\underline{Property 3}}    For $\ell \leq n-1$, the inequality
\begin{equation*}
    \max_{\vec{S}_{\ell} \in \vec{\Sc}_{\ell}} \# \T[\vec{S}_{\ell}] \,\lessapprox\,\,D_{\ell}^{-\ell + \varepsilon_{\;\!\!\circ}} \!\!\!\!\max_{\vec{S}_{\ell+1} \in \vec{\Sc}_{\ell+1}} \# \T[\vec{S}_{\ell+1}]
\end{equation*}
holds.\\

By the inclusion property \eqref{inclusion}, the broad norms over $B[\vec{S}_{\ell}]$ on the right-hand side of \eqref{l step} could be replaced by broad norms over $2\delta$-neighbourhoods of $S_{\ell}$.\\

\paragraph{\underline{\texttt{First step}}} Vacuously, the tubes belonging to $\T$ are tangent to the $n$-dimensional variety $\R^n$. Let $\mathcal{B}_{\circ}$ denote a collection of finitely-overlapping balls of radius $\delta^{\varepsilon_{\circ}}$ which cover $\bigcup_{T \in \T} T$ and define
\begin{itemize}
        \item $\delta_n := \delta^{\varepsilon_{\;\!\!\circ}}$; $D_n := 1$ and $A_n:= A$;
        \item $\Sc_n$ is the collection consisting of repeated copies of the 1-tuple $(\R^n)$, with one copy for each ball in $\mathcal{B}_{\circ}$;
        \item For each $\vec{S}_n \in \Sc_n$ assign a ball $B[\vec{S}_n] \in \mathcal{B}_{\circ}$ and let 
        \begin{equation*}
            \T[\vec{S}_n] := \big\{T \in \T : T \cap B[\vec{S_n}] \neq \emptyset\big\}.
        \end{equation*}
\end{itemize}
By a straightforward orthogonality argument (identical to that used to establish the base case in the proof of Proposition \ref{Bourgain--Guth}), Property 1 can be shown to hold with $C(\vec{D}_{n}; \vec{\delta}_{n})=1$ and $\Theta_n=1$.\\

\paragraph{\underline{\texttt{($n+2-\ell$)th step}}} Let $\ell \geq 1$ and suppose that the recursive algorithm has ran through $n+1-\ell$ steps. Since each family $\T[\vec{S}_{\ell}]$ consists of $\delta$-tubes which are tangent to $S_{\ell}$ on $B[\vec{S}_{\ell}]$, one may apply \texttt{[alg 1]} to bound the $k$-broad norm 
\begin{equation*}
    \Big\|\sum_{T \in \T[\vec{S}_{\ell}]} \chi_T\Big\|_{\BL{k,A_{\ell}}^{p_{\ell}}(B[\vec{S}_{\ell}])}.
\end{equation*} 
One of two things can happen: either \texttt{[alg 1]} terminates due to the stopping condition \texttt{[tiny]} or it terminates due to the stopping condition \texttt{[tang]}. The current recursive process terminates if the contributions from terms of the former type dominate:\\

\paragraph{\underline{\texttt{Stopping condition}}} The recursive stage of \texttt{[alg 2]} has a single stopping condition, which is denoted by \texttt{[tiny-dom]}. 
\begin{itemize}
    \item[\texttt{Stop:[tiny-dom]}] Suppose that the inequality
  \begin{equation}\label{tiny case 1}
      \sum_{\vec{S}_{\ell} \in \vec{\Sc}_{\ell}} \Big\|\sum_{T \in \T[\vec{S}_{\ell}]} \chi_T\Big\|_{\BL{k,A_{\ell}}^{p_{\ell}}(B[\vec{S}_{\ell}])}^{p_{\ell}} \,\leq\, \frac{1}{2}  \sum_{\vec{S}_{\ell} \in \vec{\Sc}_{\ell,\textrm{\texttt{tiny}}}}  \Big\|\sum_{T \in \T[\vec{S}_\ell]} \chi_T \Big\|_{\BL{k,A_{\ell}}^{p_{\ell}}(B[\vec{S}_{\ell}])}^{p_{\ell}}
\end{equation}
holds, where the right-hand summation is restricted to those $S_{\ell} \in \vec{\Sc}_{\ell}$ for which \texttt{[alg 1]} terminates owing to the stopping condition \texttt{[tiny]}. Then \texttt{[alg 2]} terminates.
\end{itemize}

Assume that the condition \texttt{[tiny-dom]} is not met. Necessarily,
  \begin{equation}\label{tangent case 1}
      \sum_{\vec{S}_{\ell} \in \vec{\Sc}_{\ell}}\Big\|\sum_{T \in \T[\vec{S}_{\ell}]} \chi_T\Big\|_{\BL{k,A_{\ell}}^{p_{\ell}}(B[\vec{S}_{\ell}])}^{p_{\ell}} \leq \,\frac{1}{2}\, \sum_{\vec{S}_{\ell} \in \vec{\Sc}_{\ell,\textrm{\texttt{tang}}}} \Big\|\sum_{T \in \T[\vec{S}_{\ell}]} \chi_T\Big\|_{\BL{k,A_{\ell}}^{p_{\ell}}(B[\vec{S}_{\ell}])}^{p_{\ell}},
\end{equation}
where the right-hand summation is restricted to those $S_{\ell} \in \vec{\Sc}_{\ell}$ for which \texttt{[alg 1]} does not terminate owing to \texttt{[tiny]} and therefore terminates owing to \texttt{[tang]}. Consequently, for each $\vec{S}_{\ell} \in \vec{\Sc}_{\ell,\textrm{\texttt{tang}}}$ the inequalities
  \begin{equation}\label{recursive step property 1}
      \Big\|\sum_{T \in \T[\vec{S}_{\ell}]} \chi_T\Big\|_{\BL{k,A_{\ell}}^{p_{\ell}}(B[\vec{S}_{\ell}])}^{p_{\ell}} \,\lessapprox \sum_{S_{\ell-1} \in \Sc_{\ell-1}[\vec{S}_{\ell}]} \Big\|\sum_{T \in \T[\vec{S}_{\ell-1}]}\chi_T\Big\|_{\BL{k,2A_{\ell-1}}^{p_{\ell}}(B[\vec{S}_{\ell-1}])}^{p_{\ell}},
\end{equation}
and
\begin{align}
\label{recursive step property 2}
    \sum_{S_{\ell-1} \in \Sc_{\ell-1}[\vec{S}_{\ell}]}  \#\T[S_{\ell-1}] &\,\lessapprox\, D_{\ell-1}^{1 + \varepsilon_{\;\!\!\circ}} \#\T[S_{\ell}]; \\
\label{recursive step property 3}
\max_{S_{\ell-1} \in \Sc_{\ell-1}[\vec{S}_{\ell}]} \#\T[S_{\ell-1}] &\,\lessapprox\,  D_{\ell-1}^{-(\ell-1) + \varepsilon_{\;\!\!\circ}} \#\T[S_{\ell}]
\end{align}
hold for some choice of:
\begin{itemize}
    \item Scale $\delta_{\ell-1}$ satisfying $\delta_{\ell} > \delta_{\ell-1} \geq \delta^{1-\varepsilon_{\;\!\!\circ}}$; non-admissible number $D_{\ell-1}$ and large integer $A_{\ell-1}$ satisfying $A_{\ell-1} \sim A_{\ell}$;
    \item Family $\Sc_{\ell-1}[\vec{S}_{\ell}]$ of $(\ell-1)$-dimensional transverse complete intersections of degree $O(1)$;
    \item Assignment of a subfamily $\T[\vec{S}_{\ell-1}] = \T[\vec{S}_{\ell}][S_{\ell-1}]$ of $\delta$-tubes for every $S_{\ell-1} \in \Sc_{\ell-1}[\vec{S}_{\ell}]$ such that each $T \in \T[\vec{S}_{\ell-1}]$ is tangent to $S_{\ell-1}$ on $B[\vec{S}_{\ell-1}]$.
\end{itemize}
Each inequality~\eqref{recursive step property 1},~\eqref{recursive step property 2} and~\eqref{recursive step property 3} is obtained by combining the definition of the stopping condition \texttt{[tang]} with Properties I, II and III from \texttt{[alg 1]}, respectively. Indeed, we take $$r_0 := \delta_{\ell},\quad \delta_{\ell-1} := \max\{r_{\!J}^{1+\varepsilon_{\;\!\!\circ}},\delta^{1-\varepsilon_{\;\!\!\circ}}\},\quad \text{and}\quad D_{\ell-1} := d^{\#_{\stc}(J)},$$ using the notation from \texttt{[alg 1]}.

The $\delta_{\ell-1}$, $D_{\ell-1}$ and $A_{\ell-1}$ can depend on the choice of $\vec{S}_{\ell}$, but this dependence can be essentially removed by pigeonholing. In particular,  $\#_{\stc}(J)$ depends on $\vec{S}_{\ell}$, but satisfies ${\#_{\stc}(J)}= O(\log \delta^{-1})$. Thus, since there are only logarithmically many possible different values, one may find a subset of the $\Sc_{\ell,\textrm{\texttt{tang}}}$ over which the $D_{\ell-1}$ all have a common value and, moreover, the inequality~\eqref{tiny case 1} still holds except that the constant $1/2$ is now replaced with, say,~$\delta^{-\varepsilon_{\;\!\!\circ}}$. A brief inspection of \texttt{[alg 1]} shows that both $\delta_{\ell-1}$ and $A_{\ell-1}$ are determined by~$D_{\ell-1}$ and so the desired uniformity is immediately inherited by these parameters. 

Letting $\vec{\Sc}_{\ell-1}$ denote the structured set 
\begin{equation*}
    \vec{\Sc}_{\ell-1} := \big\{ (\vec{S}_{\ell}, S_{\ell-1}) : \vec{S}_{\ell} \in \vec{\Sc}_{\ell,\textrm{\texttt{tang}}} \textrm{ and } S_{\ell-1} \in \Sc_{\ell-1}[\vec{S}_{\ell}] \big\},
\end{equation*}
where $\vec{\Sc}_{\ell,\textrm{\texttt{tang}}}$ is understood to be the refined collection described in the previous paragraph, it remains to verify that the desired properties hold for the newly constructed data. Property 2 follows immediately from~\eqref{recursive step property 2} and Property 3 from~\eqref{recursive step property 3}, so it remains only to verify Property 1.

By combining the inequality~\eqref{l step} from the previous stage of the algorithm with~\eqref{tangent case 1} and~\eqref{recursive step property 1}, one deduces that
   \begin{equation*}
    \Big\|\sum_{T \in \T} \chi_T\Big\|_{\BL{k,A}^p(\R^n)}  \,\lessapprox\,  C(\vec{D}_{\ell};\vec{\delta}_{\ell}) [\delta^n\#\T]^{1-\Theta_{\ell}} \Big\|\sum_{T \in \T[\vec{S}_{\ell-1}]} \chi_T\Big\|_{\ell^{p_{\ell}}\BL{k,2A_{\ell-1}}^{p_{\ell}}(\vec{\Sc}_{\ell-1})}^{\Theta_{\ell}}
\end{equation*}
where, for any $1 \leq q < \infty$ and $M \in \N$, we write
\begin{equation*}
    \Big\|\sum_{T \in \T[\vec{S}_{\ell-1}]} \chi_T\Big\|_{\ell^{q}\BL{k,M}^{q}(\vec{\Sc}_{\ell-1})} := \Bigg(\sum_{\vec{S}_{\ell-1}\in \vec{\Sc}_{\ell-1}}\!\Big\|\sum_{T \in \T[\vec{S}_{\ell-1}]} \chi_T\Big\|_{\BL{k,M}^{q}(B[\vec{S}_{\ell - 1}])}^{q}\Bigg)^{1/q}.
\end{equation*}
Taking $q = p_{\ell}$ and $M = 2A_{\ell-1}$, the logarithmic convexity inequality (Lemma~\ref{logarithmic convexity inequality lemma}) dominates the preceding expression by
\begin{equation*}
  \Big\|\sum_{T \in \T[\vec{S}_{\ell-1}]} \chi_T\Big\|_{\ell^{1}\BL{k,A_{\ell-1}}^{1}(\vec{\Sc}_{\ell-1})}^{1-\Theta_{\ell-1}/\Theta_\ell}\,  \Big\|\sum_{T \in \T[\vec{S}_{\ell-1}]} \chi_T\Big\|_{\ell^{p_{\ell-1}}\BL{k,A_{\ell-1}}^{p_{\ell-1}}(\vec{\Sc}_{\ell-1})}^{\Theta_{\ell-1}/\Theta_{\ell}}.
\end{equation*}
Observe that, trivially, one has
\begin{equation*}
 \Big\|\sum_{T \in \T[\vec{S}_{\ell-1}]} \chi_T\Big\|_{\ell^{1}\BL{k,A_{\ell-1}}^{1}(\vec{\Sc}_{\ell-1})}\, \lesssim\,\,\Big(\frac{\delta_{\ell-1}}{\delta}\Big) \delta^n\!\!\! \sum_{\vec{S}_{\ell-1} \in \vec{\Sc}_{\ell-1}} \# \T[\vec{S}_{\ell-1}].
\end{equation*}
and, by Property 2 for the tube families $\{\T[\vec{S}_{i}] : \vec{S}_i \in \vec{\Sc}_i\}$ for $\ell - 1 \leq i \leq n-1$, it follows that
\begin{equation*}
 \Big\|\sum_{T \in \T[\vec{S}_{\ell-1}]} \chi_T\Big\|_{\ell^{1}\BL{k,A_{\ell-1}}^{1}(\vec{\Sc}_{\ell-1})} \,\lesssim\, \Big(\frac{\delta_{\ell-1}}{\delta}\Big)  \Big( \prod_{i=\ell-1}^{n-1}D_{i}^{1 + \varepsilon_{\;\!\!\circ}}\Big) \delta^n \#\T.
\end{equation*}
One may readily verify that
\begin{equation*}
     C(\vec{D}_{\ell}; \vec{\delta}_{\ell})\cdot  \Big( \frac{\delta_{\ell-1}}{\delta} \prod_{i=\ell-1}^{n-1}D_{i}^{1 + \varepsilon_{\;\!\!\circ}}\Big)^{\Theta_\ell-\Theta_{\ell-1}} = C(\vec{D}_{\ell-1};\vec{\delta}_{\ell-1})
\end{equation*}
and so, combining the above estimates, 
   \begin{equation*}
     \Big\|\sum_{T \in \T} \chi_T\Big\|_{\BL{k,A}^p(\R^n)} \,\lessapprox\, C(\vec{D}_{\ell-1};\vec{\delta}_{\ell-1}) [
      \delta^n\#\T]^{1-\Theta_{\ell-1}} \Big\|\!\!\sum_{T \in \T[\vec{S}_{\ell-1}]} \!\! \chi_T  \Big\|_{\ell^{p_{\ell-1}}\BL{k,A_{\ell-1}}^{p_{\ell-1}}(\vec{\Sc}_{\ell-1})}^{\Theta_{\ell-1}},
\end{equation*}
which is Property 1.
\\

\paragraph{\underline{\texttt{The final stage}}} If the algorithm has not stopped by the $k$th step, then it necessarily terminates at the $k$th step. Indeed, otherwise~\eqref{l step} would hold for $\ell = k-1$ and families $\T[\vec{S}_{{k-1}}]$ of $\delta_{k-1}$-tubes which are tangent to some transverse complete intersection of dimension $k-1$. By the vanishing property of the $k$-broad norms as described in Lemma~\ref{vanishing lemma}, one would then have
\begin{equation*}
    \Big\|\sum_{T \in \T[\vec{S}_{k-1}]} \chi_T\Big\|_{\BL{k,A_{k-1}}^{p_{k-1}}(B[\vec{S}_{k-1}] )} = 0,
\end{equation*}
which, by \eqref{l step}, would contradict the non-degeneracy hypothesis~\eqref{non degeneracy hypothesis}.

Suppose the recursive process terminates at step $m$, so that $m \geq k$. For each $\vec{S}_m \in \vec{\Sc}_{m,\textrm{\texttt{tiny}}}$ let $\O[\vec{S}_m]$ denote the final collection of cells output by \texttt{[alg 1]} (that is, the collection denoted by $\O_{\!J}$ in the notation of the previous subsection) when applied to estimate the broad norm $\|\sum_{T \in \T[\vec{S}_m]}\chi_T\|_{\BL{k,A_m}^{p_m}(B[\vec{S}_m])}$. 
By Properties I, II and III of \texttt{[alg 1]} one has 
   \begin{equation*}
      \Big\|\sum_{T \in \T[\vec{S}_m]}\chi_T\Big\|_{\BL{k,A_m}^{p_m}(B[\vec{S}_m])}^{p_m} \,\lesssim \sum_{O \in \O[\vec{S}_m]}  \Big\|\sum_{T \in \T[O]} \chi_T \Big\|_{\BL{k,A_{m-1}}^{p_m}(O)}^{p_m},
\end{equation*}
for some $A_{m-1} \sim A_m$ where the families $\T[O]$ satisfy
\begin{equation}\label{tiny property 2}
    \sum_{O \in \O[\vec{S}_m]} \#\T[O] \,\lesssim\, D_{m-1}^{1+ \varepsilon_{\circ}}  \#\T[\vec{S}_m]
    \end{equation}
and
\begin{equation}\label{tiny property 3}
    \max_{O \in \O[\vec{S}_m]} \#\T[O] \,\lesssim\, D_{m-1}^{-(m-1) + \varepsilon_{\circ}} \#\T[\vec{S}_m]
\end{equation}
for $D_{m-1}$ a large and (in general) non-admissible parameter. Once again, by pigeonholing, one may pass to a subcollection of $\Sc_{m, \textrm{\texttt{tiny}}}$ and thereby assume that the $D_{m-1}$ (and also the $A_{m-1}$) all share a common value. 

If $\O$ denotes the union of the $\O[\vec{S}_m]$ over all $\vec{S}_m$ belonging to subcollection of $\Sc_{m, \textrm{\texttt{tiny}}}$ described above, then \texttt{[alg 2]} outputs the following inequality.
\begin{first key estimate} 
   \begin{equation*}
     \Big\|\sum_{T \in \T} \chi_T\Big\|_{\BL{k,A}^p(\R^n)} \lessapprox\,  C(\vec{D}_m; \vec{\delta}_m) [\delta^n \#\T]^{1-\Theta_m}  \Bigg(\sum_{O\in\O}\Big\|\sum_{T \in \T[O]} \chi_T\Big\|^{p_m}_{\BL{k,A_{m-1}}(O)}\Bigg)^{\frac{\Theta_m}{p_m}}\!\!.
\end{equation*}
\end{first key estimate}

\section{Proof of Theorem~\ref{main theorem}}\label{proof section}

Henceforth, fix $\T$ to be a family of $\delta$-tubes in $\R^n$ which satisfy the $(D, N)$-polynomial Wolff axiom for some $D$, chosen sufficiently large (depending only on the admissible parameters $n$ and $\varepsilon$) so as to satisfy the forthcoming requirements of the proof. Without loss of generality, we may assume that~$\T$ satisfies the non-degeneracy hypothesis~\eqref{non degeneracy hypothesis}. The algorithms described in the previous section can be applied to this tube family, leading to the final decomposition of the broad norm described in the first key estimate. One therefore wishes to show, using the polynomial Wolff axiom hypothesis, that the quantity on the right-hand side of the first key estimate can be effectively bounded, provided that the exponents $p_k, \dots, p_n$ are suitably chosen. 

Since each $O \in \O$ is contained in a ball of radius at most $\delta^{1-\varepsilon_{\;\!\!\circ}}$, trivially one may bound
\begin{equation*}
    \Big\|\sum_{T \in \T[O]} \chi_T\Big\|_{\BL{k,A_{m-1}}^{p_m}(O)}^{p_m} \,\lessapprox\, \delta^{n} \big(\#\T[O]\big)^{p_m}.
\end{equation*}
Recalling that 
$
  \Theta_{m}(1-\frac{1}{p_{m}}) = 1-\frac{1}{p},
$ this yields
\begin{equation*}
\Bigg(\sum_{O\in\O}\Big\|\sum_{T \in \T[O]} \chi_T\Big\|^{p_m}_{\BL{k,A_{m-1}}(O)}\Bigg)^{\frac{\Theta_m}{p_m}}\lessapprox\,   \big(\max_{O \in \O}\#\T[O]\big)^{1-\frac{1}{p}}\Big( \delta^{n}\sum_{O \in \O}  \#\T[O]\Big)^{\frac{\Theta_m}{p_m}}.
\end{equation*}
Now \eqref{tiny property 2} and repeated application of Property 2 from \texttt{[alg 2]} imply 
\begin{equation*}
 \sum_{O \in \O} \#\T[O] \,\lessapprox\, \Big(\prod_{i = m-1}^{n-1} D_{i}^{1 + \varepsilon_{\;\!\!\circ}}\Big)\#\T.  
\end{equation*}
 Combining this with the first key estimate and the definition of $C(\vec{D}_m;\vec{\delta}_m)$, one concludes that 
  \begin{equation}\label{reduction to max bound}
    \Big\|\sum_{T \in \T} \chi_T\Big\|_{\BL{k,A}^{p}(\R^n)}  \,\lessapprox\,  \mathbf{C}(\vec{D}; \vec{\delta}\,)  \big(\max_{O \in \O}\#\T[O]\big)^{1 - \frac{1}{p}} \Big(  \delta\sum_{T \in \T} |T| \Big)^{\frac{1}{p}}
\end{equation}
where, taking $\delta_{m-1} := \delta$, the constant takes the form
\begin{equation*}
    \mathbf{C}(\vec{D}; \vec{\delta}\,) := \prod_{i = m-1}^{n-1} \Big(\frac{\delta_{i}}{\delta}\Big)^{\Theta_{i+1}-\Theta_{i}}D_{i}^{\Theta_{i+1} - (1 - \frac{1}{p}) + O(\varepsilon_{\;\!\!\circ})}.
\end{equation*}

In order to bound the maximum appearing on the right-hand side of~\eqref{reduction to max bound}, by~\eqref{tiny property 3} and repeated application of Property 3 of \texttt{[alg 2]}, it follows that 
\begin{equation*}
\max_{O \in \O} \#\T[O] \,\lessapprox\, \Big( \prod_{i=m-1}^{\ell-1}D_{i}^{-i + \varepsilon_{\;\!\!\circ}}\Big) \max_{\vec{S}_{\ell} \in \vec{\Sc}_{\ell}}\#\T[\vec{S}_{\ell}]
\end{equation*}
whenever $m \leq \ell \leq n$.
Recall, for each tube family $\T[\vec{S}_{\ell}]$ produced by \texttt{[alg 2]} there exists a $\delta_{\ell}$-ball $B_{\delta_{\ell}} := B[\vec{S}_{\ell}]$ such that every $\delta$-tube $T \in \T[\vec{S}_{\ell}]$ is tangent to~$S_{\ell}$ in~$B_{\;\!\!\delta_{\ell}}$; in particular, 
\begin{equation*}
    T \cap B_{\;\!\!\delta_{\ell}} \cap N_{\;\!\!\delta}S_{\ell} \neq \emptyset \quad \textrm{and} \quad T \cap 2 B_{\;\!\!\delta_{\ell}} \subseteq N_{\;\!\!2\delta} S_{\ell}.
\end{equation*}
Here $S_{\ell}$ is a transverse complete intersection of dimension $\ell$ and $\overline{\deg}S_{\ell}$ depends only on the admissible parameters $n$ and $\varepsilon$. Thus, if $D$ is chosen to be sufficiently large, then the $(D,N)$-polynomial Wolff axiom implies that
\begin{equation*}
    \#\T[\vec{S}_{\ell}] \leq \#\big\{ T \in \T : |T \cap 2B_{\;\!\!\delta_{\ell}} \cap N_{\;\!\!2\delta} S_{\ell}| \gtrsim \delta_{\ell} |T|  \big\} \lesssim N\delta^{-(n-1)}\delta_{\ell}^{-n}|2B_{\;\!\!\delta_{\ell}}\cap N_{\;\!\!2\delta} S_{\ell}| .
\end{equation*}
Moreover, by Wongkew's lemma~\cite{Wongkew1993},
\begin{equation*}
   |2 B_{\;\!\!\delta_{\ell}} \cap N_{\;\!\!2\delta} S_{\ell}| \,\lesssim\,  \delta^{n-\ell} \delta_{\ell}^{\ell}, 
\end{equation*}
Combining these observations,
\begin{equation*}
   \max_{\vec{S}_{\ell} \in \vec{\Sc}_{\ell} } \# \T[\vec{S}_{\ell}] \,\lesssim\, N \delta^{-(n-1)}\Big( \frac{\delta_\ell}{\delta}\Big)^{-(n-\ell)}
\end{equation*}
so that
\begin{equation*}
    \max_{O \in \O}\#\T[O] \,\lessapprox\, N \Big( \prod_{i = m-1}^{\ell-1}D_{i}^{-i + \varepsilon_{\;\!\!\circ}}\Big)\Big( \frac{\delta_\ell}{\delta}\Big)^{-(n-\ell)} \delta^{-(n-1)}
\end{equation*}
for all $m \leq \ell \leq n$. Finally, these $n-m+1$ different estimates can be combined into a single inequality by taking a weighted geometric mean, yielding:

\begin{second key estimate} Let $0 \leq \gamma_m, \dots, \gamma_n \leq 1$ satisfy $\sum_{j = m}^{n} \gamma_{j} = 1$. Then
\begin{equation*}
    \max_{O \in \O} \#\T[O] \,\lessapprox\, N \Big(\prod_{i=m-1}^{n-1}\Big(\frac{\delta_i}{\delta}\Big)^{-(n-i)\gamma_{i}} D_{i}^{-i(1-\sum_{j=m}^{i} \gamma_j) + O(\varepsilon_{\;\!\!\circ})} \Big) \delta^{-(n-1)}.
\end{equation*}
\end{second key estimate}

Substituting the second key estimate into~\eqref{reduction to max bound}, one obtains
  \begin{equation*}
      \Big\|\sum_{T \in \T} \chi_T\Big\|_{\BL{k,A}^{p}(\R^n)} \,\lessapprox\, N^{1 - \frac{1}{p}} \Big(\prod_{i = m-1}^{n-1} \Big(\frac{\delta_{i}}{\delta}\Big)^{X_{i}}D_{i}^{Y_{i} + O(\varepsilon_{\;\!\!\circ})}\Big) \delta^{-(n-1-\frac{n}{p})}
      \Big(\sum_{T \in \T} |T| \Big)^{\frac{1}{p}} 
\end{equation*}
where
\begin{align*}
    X_{i}&:= \Theta_{i+1}-\Theta_{i}-(n-i)\gamma_{i}\Big(1-\frac{1}{p}\Big); \\
    Y_{i}&:= \Theta_{i+1} - \Big(1+i\big(1-\sum_{j=m}^{i} \gamma_j\big)\Big)\Big(1-\frac{1}{p}\Big).
\end{align*}
One now chooses the various exponents so that $X_{i}, Y_{i} = 0$ for all $m \leq i \leq n-1$ and $Y_{m-1} = 0$. This ensures that the $(\delta_i/\delta)^{X_i}$ and $D_i^{Y_i}$ factors in the above expression are admissible but does not allow one to control the $D_i^{O(\varepsilon_{\;\!\!\circ})}$ factors, which may still be non-admissible. To deal with the $D_i^{O(\varepsilon_{\;\!\!\circ})}$ one may perturb the~$p$ exponent which results under the conditions $X_{i}, Y_{i} = 0$, so that $Y_{i}$ becomes negative, and then choose $\varepsilon_{\;\!\!\circ}$ sufficiently small depending on the choice of perturbation. This yields an open range of $k$-broad estimates, which can then be trivially extended to a closed range via interpolation through logarithmic convexity (the interpolation argument relies on the fact that one is permitted an $\delta^{-\varepsilon}$-loss in the constants in the $k$-broad inequalities).

The condition $X_{i} = 0$ is equivalent to
\begin{equation}\label{our r constraint}
\Big(1 - \frac{1}{p_{i+1}}\Big)^{-1}-\Big(1 - \frac{1}{p_{i}}\Big)^{-1} = (n-i)\gamma_{i}
\end{equation}
whilst the condition $Y_{i-1} = 0$ is equivalent to 
\begin{equation}\label{our D constraint}
    \Big(1 - \frac{1}{p_{i}}\Big)^{-1} = i - (i-1)\sum_{j=m}^{i-1} \gamma_j.
\end{equation}
Choose $p_m := \frac{m}{m -1}$ so that~\eqref{our D constraint} holds in the $i = m$ case. The remaining~$p_{i}$ are then defined in terms of the $\gamma_j$ by the equation
\begin{equation}\label{linear system 1} \Big(1 - \frac{1}{p_{i}}\Big)^{-1}=m + \sum_{j=m}^{i-1}(n-j)\gamma_j
\end{equation} 
so that each of the $n-m$ constraints in~\eqref{our r constraint} is met. 

It remains to solve for the $n-m+1$ variables $\gamma_m, \dots, \gamma_n$. By comparing the right-hand sides of~\eqref{our D constraint} and~\eqref{linear system 1}, it follows that
\begin{equation}\label{linear system 2}
  \sum_{j = m}^{i-1} (n+i - j - 1)\gamma_j = i - m  \qquad \textrm{for $m+1 \leq i \leq n$.}
\end{equation}
To solve this linear system, let $\beta_{i}$ denote the left-hand side of \eqref{linear system 2} and observe that 
\begin{equation*}
    \beta_{i+1} + \beta_{i-1} - 2\beta_{i} =  n\gamma_{i}-(n - 1)\gamma_{i-1} \qquad\textrm{ for $m+1 \leq i \leq n-1$,}
\end{equation*}
where $\beta_m := 0$. On the other hand, by considering the right-hand side of~\eqref{linear system 2}, it is clear that $\beta_{i+1} + \beta_{i-1} - 2\beta_{i} = 0$. Combining these observations gives a recursive relation for  $\gamma_j$ and from this one deduces that 
\begin{equation*}
    \gamma_j = \frac{1}{n} \prod_{i=m}^{j-1} \frac{n-1}{n} = \frac{1}{n}\Big(1-\frac{1}{n}\Big)^{{j-m}} \qquad \textrm{for $m \leq j \leq n-1$.}
\end{equation*}

It remains to check that these parameter values give the correct value of $p_n$, corresponding to the exponent featured in Theorem~\ref{main theorem}. It follows from~\eqref{our D constraint} that
\begin{align*}
    \Big(1 - \frac{1}{p_n}\Big)^{-1} &= n -(n-1)\sum_{j=m}^{n-1} \frac{1}{n}\Big(1-\frac{1}{n}\Big)^{{j-m}} \\
     &= 1+(n-1)\Big(1-\frac{1}{n}\Big)^{n-m}.
\end{align*}
This is largest when $m=k$, which directly yields the desired range of $p$, as stated in Theorem~\ref{main theorem}, completing the proof. 





\begin{thebibliography}{10}

\bibitem{Bennett2014} J. Bennett, Aspects of multilinear harmonic analysis related to transversality, in {\it Harmonic Analysis and PDE}, 1--28, Contemp. Math. \textbf{612}, Amer. Math. Soc., Providence, RI. 

\bibitem{BCT2006}
J. Bennett, A. Carbery, and T. Tao, \emph{On the multilinear
  restriction and {K}akeya conjectures}, Acta Math. \textbf{196} (2006), no.~2,
  261--302. 


\bibitem{Bourgain1991a}  J. Bourgain, \emph{Besicovitch-type maximal operators and applications to Fourier analysis}, Geom. Funct. Anal. \textbf{22} (1991), no.~2, 147--187.

  


\bibitem{Bourgain1999}
\bysame, \emph{On the dimension of {K}akeya sets and related maximal
  inequalities}, Geom. Funct. Anal. \textbf{9} (1999), no.~2, 256--282.

\bibitem{BG2011}
J. Bourgain and L. Guth, \emph{Bounds on oscillatory integral operators
  based on multilinear estimates}, Geom. Funct. Anal. \textbf{21} (2011),
  no.~6, 1239--1295. 

\bibitem{CDR1986} M. Christ,  J. Duoandikoetxea, and J. L. Rubio  de  Francia,
\emph{Maximal  operators  associated
to the Radon transform and the Calder\'on-Zygmund method of rotations}, Duke Math. J. \textbf{53}
(1986), no.~1, 189--209.

\bibitem{Cordoba1977}
A. C\'{o}rdoba, \emph{The {K}akeya maximal function and the spherical
  summation multipliers}, Amer. J. Math. \textbf{99} (1977), no.~1, 1--22.
  


\bibitem{Davies1971}
R. O. Davies, \emph{Some remarks on the {K}akeya problem}, Proc. Cambridge
  Philos. Soc. \textbf{69} (1971), 417--421. 

\bibitem{Dvir2009}
Z. Dvir, \emph{On the size of {K}akeya sets in finite fields}, J. Amer. Math.
  Soc. \textbf{22} (2009), no.~4, 1093--1097. 

\bibitem{GR}
B. Green and I. Z. Ruzsa, \emph{{O}n the arithmetic {K}akeya conjecture of
  {K}atz and {T}ao}, Preprint: arXiv:1712.02108.



\bibitem{Guth2010}
L. Guth, \emph{The endpoint case of the {B}ennett-{C}arbery-{T}ao multilinear
  {K}akeya conjecture}, Acta Math. \textbf{205} (2010), no.~2, 263--286.

\bibitem{Guth2016b}
\bysame, \emph{Degree reduction and graininess for {K}akeya-type sets in
  {$\mathbb{R}^3$}}, Rev. Mat. Iberoam. \textbf{32} (2016), no.~2, 447--494.

\bibitem{Guth2016}
\bysame, \emph{A restriction estimate using polynomial partitioning}, J. Amer.
  Math. Soc. \textbf{29} (2016), no.~2, 371--413. 
  
  \bibitem{Guth2019}
\bysame, \emph{Restriction estimates using polynomial partitioning {II}},
  Acta Math. \textbf{221} (2018), 81--142.

\bibitem{GK2015}
L. Guth and N. H. Katz, \emph{On the {E}rd\"os distinct distances
  problem in the plane}, Ann. of Math. (2) \textbf{181} (2015), no.~1,
  155--190. 
  
  \bibitem{GHI}
L. Guth, J. Hickman, and M. Iliopoulou, \emph{Sharp estimates for oscillatory
  integral operators via polynomial partitioning}, Preprint: arXiv:1710.10349.

\bibitem{GZ2018}
L. Guth and J. Zahl, \emph{Polynomial {W}olff axioms and {K}akeya-type
  estimates in {$\mathbb{R}^4$}}, Proc. Lond. Math. Soc. (3) \textbf{117}
  (2018), no.~1, 192--220. 

\bibitem{HR2018}
J. Hickman and K. M. Rogers, \emph{Improved {F}ourier restriction
  estimates in higher dimensions}, Preprint: arXiv:1807.10940.

\bibitem{KLT2000}
N. H. Katz, I. \L aba, and T. Tao, \emph{An improved bound on
  the {M}inkowski dimension of {B}esicovitch sets in {${\mathbb{R}}^3$}}, Ann. of
  Math. (2) \textbf{152} (2000), no.~2, 383--446. 

\bibitem{KR2018}
N. H. Katz and K. M. Rogers, \emph{On the polynomial Wolff axioms},
  Geom. Funct. Anal. \textbf{28} (2018), 1706--1716.

\bibitem{KT1999}
N. H. Katz and T. Tao, \emph{Bounds on arithmetic projections, and
  applications to the {K}akeya conjecture}, Math. Res. Lett. \textbf{6} (1999),
  no.~5-6, 625--630. 


\bibitem{KT2002}
\bysame, \emph{New bounds for {K}akeya problems}, J. Anal. Math. \textbf{87}
  (2002), 231--263, Dedicated to the memory of Thomas H. Wolff. 

\bibitem{KT2002b}
\bysame, \emph{Recent progress on the Kakeya conjecture}, in Harmonic Analysis and Partial Differential Equations (El Escorial, 2000). Publ. Mat. 2002, 161--179.
\bibitem{KZ2019}
N. H. Katz and J. Zahl, \emph{An improved bound on the {H}ausdorff
  dimension of {B}esicovitch sets in {$\mathbb{R}^3$}}, J. Amer. Math. Soc.
  \textbf{32} (2019), no.~1, 195--259. 

\bibitem{Maple}
{Maplesoft, a division of Waterloo Maple Inc.}, \emph{Maple 17}.

\bibitem{Matousek} J. Matou\v{s}ek, \emph{Using the {B}orsuk-{U}lam theorem},
   {Universitext},
Springer-Verlag, Berlin,
     2003,
    xii+196.

\bibitem{Schlag1998} W. Schlag, \emph{A geometric inequality with applications to the {K}akeya
              problem in three dimensions}, Geom. Funct. Anal.
 \textbf{8} (1998), no.~3, 606--625.
   

\bibitem{TS2012}
J. Solymosi and T. Tao, \emph{An incidence theorem in higher
  dimensions}, Discrete Comput. Geom. \textbf{48} (2012), no.~2, 255--280.
  
\bibitem{Stone1942} A. H. Stone and J. W. Tukey, \emph{Generalized ``sandwich'' theorems}, Duke Math. J.
 \textbf{9} (1942), 356--359.

\bibitem{Tao2005}
T. Tao, \emph{A new bound for finite field {B}esicovitch sets in four
  dimensions}, Pacific J. Math. \textbf{222} (2005), no.~2, 337--363.

\bibitem{TVV1998}
T. Tao, A. Vargas, and L. Vega, \emph{A bilinear approach to the
  restriction and {K}akeya conjectures}, J. Amer. Math. Soc. \textbf{11}
  (1998), no.~4, 967--1000. 

\bibitem{Wang2018}
H. Wang, \emph{A restriction estimate in {$\mathbb{R}^3$} using brooms},
  Preprint: arXiv:1802.04312.

\bibitem{Wolff1995}
T. Wolff, \emph{An improved bound for {K}akeya type maximal functions},
  Rev. Mat. Iberoamericana \textbf{11} (1995), no.~3, 651--674. 

\bibitem{Wolff1999}
\bysame, \emph{Recent work connected with the {K}akeya problem}, Prospects in
  mathematics ({P}rinceton, {NJ}, 1996), Amer. Math. Soc., Providence, RI,
  1999, pp.~129--162. 

\bibitem{Wongkew1993} R. Wongkew,
    \emph{Volumes of tubular neighbourhoods of real algebraic varieties},
  Pacific J. Math. \textbf{159} (1993), no.~1, 177--184.

\bibitem{Zahl2018}
J. Zahl, \emph{A discretized {S}everi-type theorem with applications to
  harmonic analysis}, Geom. Funct. Anal. \textbf{28} (2018), no.~4, 1131--1181.

\end{thebibliography}
\end{document}